\theoremstyle{plain}
\newtheorem{theorem}{Theorem}[section]
\newtheorem{lemma}[theorem]{Lemma}
\newtheorem{proposition}[theorem]{Proposition}
\newtheorem{corollary}[theorem]{Corollary}
\theoremstyle{remark}
\newtheorem{remark}[theorem]{Remark}
\theoremstyle{definition}
\newtheorem{definition}[theorem]{Definition}
\setlist{nosep} 
\def\beq{\begin{equation}}
\def\eeq{\end{equation}}
\DeclareMathOperator{\diam}{diam}
\DeclareMathOperator{\Geom}{Geom}
\DeclareMathOperator{\bE}{\mathbb{E}}
\DeclareMathOperator{\bP}{\mathbb{P}}
\def\eps{{\varepsilon}}
\def\bN{{\mathbb{N}}}
\def\bR{{\mathbb{R}}}
\def\bW{{\mathbb{W}}}
\def\bZ{{\mathbb{Z}}}
\def\cA{{\mathcal{A}}}
\def\cB{{\mathcal{B}}}
\def\cR{{\mathcal{R}}}
\title{Rates in almost sure invariance principle for dynamical systems with some hyperbolicity}
\author{Alexey Korepanov \\
{\small University of Exeter, UK}}
\date{27 April 2017 (updated 10 July 2018)}
\begin{document}

\maketitle

\begin{abstract}
  We prove the almost sure invariance principle 
  with rate $o(n^{\varepsilon})$ for every $\varepsilon > 0$ for 
  H\"older continuous observables
  on nonuniformly expanding and nonuniformly hyperbolic transformations
  with exponential tails. Examples include Gibbs-Markov maps with big images,
  Axiom A diffeomorphisms, dispersing billiards and
  a class of logistic and H\'enon maps.
  The best previously proved rate is
  $O(n^{1/4} (\log n)^{1/2} (\log \log n)^{1/4})$.
  
  As a part of our method, we show that nonuniformly expanding transformations
  are factors of Markov shifts with simple structure and 
  natural metric (similar to the classical Young towers). The factor map is
  Lipschitz continuous and probability measure preserving.
  For this we do not require the exponential tails.
\end{abstract}


\maketitle

\section{Introduction}

\begin{definition}
  We say that a random process \(X_0, X_1, \ldots\) satisfies the 
  \emph{Almost Sure Invariance Principle} (ASIP) with rate, say \(o(n^{\eps})\)
  with \(\eps \in (0,1/2)\), if without changing the distribution,
  \(\{X_n, n \geq 0\}\) can be redefined on a new probability space
  with a Brownian motion \(W_t\) such that
  \[
    X_n = W_n + o(n^{\eps})
    \qquad \text{almost surely}
    .
  \]
\end{definition}

The ASIP is a strong statistical property. It implies directly
the functional central limit theorem, the functional law of iterated logarithm
and other statistical laws, see Philipp and Stout~\cite[Chapter 1]{PS75}.
The rate in the ASIP has additional powerful implications, see
Berkes, Liu and Wu~\cite{BLW14} and references therein.

Suppose that \(T \colon \Lambda \to \Lambda\) is a 
nonuniformly expanding or nonuniformly hyperbolic transformation
as in Young~\cite{Y98,Y99} with \emph{exponential tails}
(see Section~\ref{sec:YT}),
such as Gibbs-Markov maps with big images,  Axiom A diffeomorphisms,
dispersing billiards and a class of logistic and H\'enon maps.

Suppose that \(\nu\) is the unique \(T\)-invariant ergodic physical measure,
\(v \colon \Lambda \to \bR\) is a H\"older continuous observable
with \(\int v \, d\nu = 0\) and \(v_n = \sum_{k=0}^{n-1} v \circ T^k\).
Then \(v_n, n \geq 0\) is a random process with stationary increments on the probability
space \((\Lambda, \nu)\).

We prove that  \(v_n\) satisfies the ASIP with rate \(o(n^{\eps})\) for every \(\eps > 0\).
Our results strongly improve the best previously available rates.

\begin{remark}
  Our analysis is restricted to discrete time
  \(\bR\)-valued processes.
  The ASIP with good rates for flows and \(\bR^d\)-valued processes
  with dependent increments, such as those in dynamical systems,
  is an important open problem.
\end{remark}

\begin{remark}
  We only consider processes with bounded increments. This is automatic
  for dynamical systems with H\"older continuous observables as above.
\end{remark}

The ASIP has been introduced by Strassen~\cite{S64,S67}, proved for processes
with independent increments and martingales using the Skorokhod embedding.
Approximations with martingales turned out to be very robust, see
Philipp and Stout~\cite{PS75};
they have been used to prove the ASIP for various dynamical systems
\cite{CB01,CM15,DP84,FMT03,HK82,MN05,MR12},
including the nonuniformly expanding and nonuniformly hyperbolic maps in
\cite{MN05}.

A downside of the martingale method is that the best rate in the ASIP which
the Skorokhod embedding can produce is 
\(O(n^{1/4} (\log n)^{1/2} (\log \log n)^{1/4})\), see Kiefer \cite{K69}.
For nonuniformly expanding and nonuniformly hyperbolic systems,
this rate has been achieved by Cuny and Merlev\`ede \cite{CM15}
and Korepanov, Kosloff and Melbourne~\cite{KKM16mart}.

For processes with independent and identically distributed increments,
Koml\'os, Major and Tusn\'ady in their celebrated work~\cite{KMT75} proved the ASIP with a
much better rate \(O(\log n)\), which is in fact unimprovable.
Their proof is based on the so-called Hungarian construction
and uses the quantile transform rather than the Skorokhod embedding.

For processes with dependent increments, it is also possible 
to prove the ASIP without relying on the Skorokhod embedding,
but getting good rates proved to be challenging. For instance, 
Gou\"ezel~\cite{G10} used blocking techniques to construct an approximation
with a process with independent increments, for which the ASIP
with the optimal rate \(O(\log n)\) is known. However, an efficient control of the 
approximation error is tricky, and the best rate he could reach was 
\(o(n^{1/4+\eps})\) for every \(\eps > 0\), roughly the same as in the martingale method.
For different reasons, \(o(n^{1/4+\eps})\) was not surpassed by various other
methods~\cite{BP79,KP80,MN09,MR12}.

In the dependent setting, the rate \(O(n^{1/4} (\log n)^{1/2} (\log \log n)^{1/4})\)
was unbeaten until very recently.
First, Berkes, Liu and Wu \cite{BLW14} proved the ASIP with rate 
\(o(n^\eps)\) for every \(\eps > 0\) for processes generated by a Bernoulli shift:
\[
  X_n = \sum_{k=0}^{n-1} \psi(\ldots, \xi_{k-1}, \xi_k, \xi_{k+1}, \ldots)
  ,
\]
where \(\{\xi_k\}\) is a sequence of independent identically distributed
random variables and \(\psi\) is a sufficiently nice function.
Their result is based on an insightful approximation by
a process with independent increments and
a Koml\'os-Major-Tusn\'ady type result for processes with independent
but not identically distributed increments by Sakhanenko~\cite{S06}.
Soon after, Merlev\`ede and Rio~\cite{MR15} obtained the rate \(O(\log n)\)
for Harris recurrent geometrically ergodic Markov chains, strongly using the
Markovian structure and in particular the regeneration technique.

The result of \cite{BLW14} readily covers some smooth dynamical systems such as
the doubling map \(x \mapsto 2x \pmod{1}\), whose natural symbolic coding
is a Bernoulli shift. But such systems are rare: for instance, they
do not include smooth perturbations of the doubling map.

In the present work we extend the result of~\cite{BLW14} to
a large class of widely studied dynamical systems.
Our strategy is to construct a semiconjugacy between a dynamical system
in question and a Bernoulli shift. The semiconjugacy preserves the probability
measure and sufficient structure for verification of assumptions of~\cite{BLW14}.

\begin{remark}  
  The historical overview above focuses on what is immediately relevant
  to our goals, without any attempt to describe the vast literature
  on the ASIP. For a thorough description of rates related results,
  see~\cite{BLW14} and the review by Zaitsev~\cite{Z13}.
\end{remark}

\begin{remark}
  Chernov and Haskell \cite{CH96} prove the \emph{Bernoulli property} for 
  K-mixing nonuniformly hyperbolic maps.
  That is, such maps are measure-theoretically isomorphic to Bernoulli shifts.
  They remark that even though the Bernoulli property is a characterization
  of extreme chaotic behavior, it is not helpful in proving statistical
  properties like the central limit theorem. This is because a measure-theoretic
  isomorphism alone does not have to preserve any useful information about the
  structure of the space, such as metric or coordinates.
  
  In contrast, we build a semiconjugacy to a Bernoulli shift which preserves
  enough information to prove the ASIP.
\end{remark}

\begin{remark}
  As an essential part of our proof, for a nonuniformly expanding
  dynamical system we construct an extension which is a renewal Markov shift,
  so that the factor map is Lipschitz with respect to a natural metric.
  Our construction is inspired by the coupling lemma for dispersing
  billiards, as it appears in Chernov and Markarian~\cite[Lemma~7.24]{CM06}.

  After circulating the first version of this paper, the author has been
  made aware of the work by Zweim\"uller~\cite{Z09}, where he shows that
  nonuniformly expanding dynamical systems are similar to renewal Markov shifts.

  Two dynamical systems are similar if they are factors of a common
  extension. All probability measure preserving systems are trivially
  similar, but in infinite ergodic theory the similarity is a highly
  nontrivial relation. The focus of~\cite{Z09} is on infinite measure
  preserving systems.

  Our construction is remarkably similar to the one in~\cite{Z09},
  although we draw rather different conclusions:
  we make observations which allow us to treat probability measure preserving
  systems.
\end{remark}

\section{Statement of the result}
\label{sec:YT}

We use notation \(\bN = \{1,2,\ldots\}\) and \(\bN_0 = \{0,1,\ldots\}\).
All functions, subsets and partitions are assumed to be measurable.
When we work with metric spaces, the default sigma algebra is Borel,
and for finite and countable spaces the sigma algebra is discrete.

Let \((\Lambda,d_\Lambda)\) be a bounded metric space and 
\(T \colon \Lambda \to \Lambda\) be a transformation.
Let \(Y\) be a subset of \(\Lambda\) and \(m\) be 
a probability measure on \(Y\). Let \(\alpha\) be an at most countable
partition of \(Y\) (modulo a zero measure set) such that \(m(a) > 0\)
for all \(a \in \alpha\).

Let \(\tau \colon Y \to \bN\) be an integrable function
which is constant on each \(a \in \alpha\) with value \(\tau(a)\)
such that \(T^{\tau(a)} (y) \in Y\) for every \(y \in a\), \(a \in \alpha\).
Let \(F \colon Y \to Y\), \(F(y) = T^{\tau(y)} (y)\)
be the induced map.

We assume that for each \(a \in \alpha\), the map 
\(F\) restricts to a (measure-theoretic)
bijection from \(a\) to \(Y\).
Further, there are constants
\(0 < \eta \leq 1\), \(\lambda > 1\) and \(K, K_\tau \geq 1\)
such that for all \(a \in \alpha\) and \(x,y \in a\):
\begin{itemize}
  \item \(d_\Lambda(F(x), F(y)) \geq \lambda d(x,y)\),
  \item \(d_\Lambda(T^\ell (x), T^\ell (y)) 
    \leq K_\tau \, d_\Lambda(F(x), F(y))\)
    for all \(0 \leq \ell < \tau(a)\),
  \item the restriction 
    \(F \colon a \to Y\) is nonsingular and its inverse Jacobian
    \(\zeta_a = \frac{dm}{dm \, \circ \, F}\) satisfies
      \begin{equation}
        \label{eq:dist}
        |\log \zeta_a(x) - \log \zeta_a(y)| 
        \leq K d_\Lambda^\eta(F(x), F(y))
        .
      \end{equation}
\end{itemize}

Finally, we assume that the induced map $F \colon Y \to Y$ 
allows a non-pathological coding by elements of \(\alpha\).
We require that the set
\[
  \{ (a_0, a_1, \ldots) \in \alpha^{\bN_0} :
  \text{there exists }y \in Y \text{ with } F^k(y) \in a_k \text{ for all } k  \}
\]
is measurable in \(\alpha^{\bN_0}\) (in the product topology with Borel sigma algebra).

We say that \(T \colon \Lambda \to \Lambda\) as above is a
\emph{nonuniformly expanding} map. We say that it has \emph{exponential
(return time) tails,} if \(\int_Y e^{\beta \tau} \, dm < \infty\)
with some \(\beta > 0\).

It is standard \cite{Y99} that there is a unique 
\(T\)-invariant ergodic probability measure on \(\Lambda\),
with respect to which \(m\) is absolutely continuous.
We denote this measure by \(\nu\).

For on observable \(v \colon \Lambda \to \bR\), denote
\[
  |v|_\infty = \sup_{x \in \Lambda} |v(x)|
  , \quad
  |v|_\eta = \sup_{x \neq y \in \Lambda} \frac{|v(x) - v(y)|}{d^\eta(x,y)}
  \quad \text{and} \quad
  \|v\|_\eta = |v|_\infty + |v|_\eta
  .
\]
We say that \(v\) is \emph{centered}, if \(\int v \, d\nu = 0\),
and that \(v\) is H\"older, if \(\|v\|_\eta < \infty\).

Our main result is:

\begin{theorem}
  \label{thm:dura}
  Suppose that there exists \(\beta > 0\) such that 
  \(\int_Y e^{\beta \tau} \, dm < \infty\).
  If \(v \colon \Lambda \to \bR\) is a H\"older centered observable,
  then the process \(v_n = \sum_{k=0}^{n-1} v \circ T^k\),
  defined on the probability space \((\Lambda, \nu)\), satisfies the ASIP
  with rate \(o(n^{\eps})\) for every \(\eps > 0\).
\end{theorem}

\begin{remark}
  For maps which are naturally a Bernoulli shift, such as the doubling map, 
  Theorem~\ref{thm:dura} follows directly from~\cite{BLW14}.
  Our result is new for smooth perturbations of the doubling map
  and, for example, for:
  \begin{itemize}
    \item smooth expanding circle maps,
    \item Gibbs-Markov maps with big images,
    \item unimodal maps such as logistic with Collet-Eckmann parameters~\cite{BLS03}.
  \end{itemize}
\end{remark}

\begin{remark}
  In nonuniformly hyperbolic maps with exponential tails
  and uniform contraction along stable leaves, as in Young \cite{Y98},
  H\"older observables reduce to H\"older observables on
  a nonuniformly expanding quotient system through a bounded coboundary.
  A detailed exposition can be found in~\cite[Section~5]{KKM16mart}.
  Thus Theorem~\ref{thm:dura} implies the ASIP with rate \(o(n^{\eps})\) for
  every \(\eps>0\) for maps such as:
  \begin{itemize}
    \item Anosov and Axiom A diffeomorphisms,
    \item dispersing billiards,
    \item H\'enon maps with Benedicks-Carleson parameters,
    \item Lozi maps.
  \end{itemize}
\end{remark}

The paper is organized as follows: in Section~\ref{sec:BYT} we introduce
the notion of \emph{Markov Young towers} and state Theorem~\ref{thm:joke}
which establishes a semiconjugacy between \(T \colon \Lambda \to \Lambda\) and
a Markov Young tower.
Theorem~\ref{thm:joke} is proved in Section~\ref{sec:joke}.
We prove Theorem~\ref{thm:dura} in Section~\ref{sec:dura}.

\section{Markov Young towers}
\label{sec:BYT}

Suppose that
\begin{itemize}
  \item \((\cA, \bP_\cA)\) is a finite or countable probability space,
  \item \(h_\cA \colon \cA \to \bN\) is an integrable function,
  \item \(0 < \xi < 1\) is a constant.
\end{itemize}

Define a probability space \((X, \bP_X) = (\cA^\bN, \bP_\cA^\bN)\) and let
\(f_X \colon X \to X\) be the left shift,
\[
  f_X(a_0, a_1, \ldots) = (a_1, a_2, \ldots)
  .
\]
Define \(h \colon X \to \bN\), \(h(a_0, a_1, \ldots) = h_\cA(a_0)\).
Let \(f \colon \Delta \to \Delta\) be a suspension over \(f_X \colon X \to X\) with
a roof function \(h\), i.e.\ 
\beq \begin{aligned}
  \label{eq:susp}
  \Delta & = \{(x,\ell) \in X \times \bZ
    : 0 \leq \ell < h(x)\}
  \\
  f(x,\ell) & = \begin{cases} (x, \ell + 1), & \ell < h(x) - 1, \\
    (f_X (x), 0), & \ell = h(x) - 1 \end{cases}
  .
\end{aligned} \eeq

Define a distance \(d\) on \(X\) by \(d(x,y) = \xi^{s(x,y)}\),
where \(s \colon X \times X \to \bN_0\) is the separation time,
\[
  s((a_0, a_1, \ldots), (b_0, b_1, \ldots) )
  = \inf \{ j \geq 0 : a_j \neq b_j \}
  .
\]
Let \(d\) also denote the natural compatible distance on 
\(\Delta\):
\beq
  \label{eq:cdis}
  d((x,k),(y,j)) = \begin{cases}
  1, & k \neq j \\
  d(x,y), & k=j
  \end{cases}.
\eeq

Let \(\bar{h} = \int h \, dm\). Let \(\bP\) be the probability measure
on \(\Delta\) given by \(\bP(A \times \{\ell\}) = \bar{h}^{-1} m(A)\) for all
\(\ell \geq 0\) and \(A \subset \{y \in Y : h(y) \geq \ell + 1\}\).
Note that \(\bP\) is \(f\)-invariant.

Let \(\Delta_k = \{(y,\ell) \in \Delta : \ell = k\}\).
Then \(X\) is naturally identified with \(\Delta_0\), which we refer to as
the \emph{base} of the suspension, and \(\bP_X\), \(f_X\)
have their counterparts on \(\Delta_0\), which we also denote
\(\bP_X\), \(f_X\).

\begin{definition}
  We call the map \(f \colon \Delta \to \Delta\)
  as above a (non-invertible) \emph{Markov Young tower}.
\end{definition}

\begin{remark}
  \label{rmk:defBYT}
  To define a Markov Young tower, we need an at most countable probability space
  \((\cA, \bP_\cA)\), an integrable function \(h_\cA \colon \cA \to \bN\)
  and a constant \(0 < \xi < 1\). Further we always use notation for 
  Markov Young towers as above, i.e.\  with the
  symbols \(f\), \(\Delta\), \(\cA\), \(\bP_\cA\), \(X\), \(\bP_X\),
  \(f_X\), \(h\), \(\bP\), \(d\), \(\xi\).
\end{remark}

\begin{remark}
  Similar to the classical Young towers, our Markov Young towers are very simple
  objects on their own, studied by various people under different names.
  We chose the term ``Markov Young tower'' because for both,
  the key property is not just their own structure but
  the relation to a large class of dynamical systems (see Theorem~\ref{thm:joke}
  below). It allowed Young~\cite{Y98} to prove the exponential decay of
  correlations for dispersing billiards among other maps, and it is
  an essential ingredient in the proof of our main result.
\end{remark}

Our key technical result is:

\begin{theorem}
  \label{thm:joke}
  Suppose that \(T \colon \Lambda \to \Lambda\) is a nonuniformly expanding map.
  Then there exists a Markov Young tower \(f \colon \Delta \to \Delta\) and
  a map \(\pi \colon \Delta \to \Lambda\), defined \(\bP\)-almost everywhere,
  such that
  \begin{itemize}
    \item \(\pi\) is Lipschitz:
      \[
        d_\Lambda(\pi(x), \pi(y))
        \leq C_\Lambda d(x,y)
        ,
      \]
      where \(C_\Lambda = \lambda K_\tau \, \diam \Lambda\),
    \item \(\pi\) is a semiconjugacy: \(\bP\)-almost surely, \(T \circ \pi = \pi \circ f\),
    \item \(\pi\) preserves the probability measures: \(\pi_* \bP_X = m\) and \(\pi_* \bP = \nu\).
  \end{itemize}

  In addition, moments of \(h\) are closely related to those of \(\tau\):
  \begin{itemize}
    \item (Weak polynomial moments)
      If there exist \(C_\tau > 0\) and \(\beta > 1\) such that
      \(m(\tau \geq \ell) \leq C_\tau \ell^{-\beta}\) for all \(\ell \geq 1\),
      then \(\bP_X(h \geq \ell) \leq C \ell^{-\beta}\) for all \(\ell \geq 1\),
      where the constant \(C\) continuously depends on \(C_\tau\), \(\beta\), \(\lambda\),
      \(K\) and \(\eta\).
    \item (Strong polynomial moments)
      If there exist constants \(C_\tau > 0\) and \(\beta > 1\) such that
      \(\int \tau^\beta \, dm \leq C_\tau \),
      then \(\int h^\beta \, d\bP_X \leq C \),
      where the constant \(C\) continuously depends on \(C_\tau\), \(\beta\), \(\lambda\),
      \(K\) and \(\eta\).
    \item (Exponential and stretched exponential moments)
      If there exist constants \(C_\tau > 0\), \(\beta > 0\) and \(\gamma \in (0,1]\)
      such that \(\int e^{\beta \tau^\gamma} \, dm \leq C_\tau\), then
      \(\int e^{\beta' h^\gamma} \, d\bP_X \leq C\), where the constants \(\beta' \in (0,\beta]\)
      and \(C > 0\) depend continuously on \(C_\tau\), \(\beta\), \(\gamma\), \(\lambda\),
      \(K\) and \(\eta\).
    \item (Exactly exponential moments)
      If \(\int e^{\beta \tau} \, dm < \infty\) for some \(\beta > 0\),
      then \(f \colon \Delta \to \Delta\) can be constructed so that 
      \[
        \bP_X(h = n) = 
        \begin{cases}
          \theta (1-\theta)^{-1} (1-\theta)^{n/N}, & n \in \{N, 2N, 3N, \dots\} \\
          0, & \text{else}
        \end{cases}
      \]
      with some \(0 < \theta < 1\) and \(N \geq 1\).
  \end{itemize}
\end{theorem}

\begin{remark}
  The exact exponential moments in Theorem~\ref{thm:joke} allow us to represent in a natural
  way \(f \colon \Delta \to \Delta\) as a factor of a Bernoulli
  shift and use \cite{BLW14} to prove the ASIP, see Section~\ref{sec:dura}.
  Our results are limited to \(\int e^{\beta \tau} \, dm < \infty\), 
  because without the exact exponential moments such a representation does not work.
\end{remark}

\section{Proof of Theorem~\ref{thm:joke}}
\label{sec:joke}

For \(v \colon \Delta \to \bR\) and \(\eta \in (0,1]\), denote
\[
  |v|_\infty = \sup_{x \in \Delta} |v(x)|
  , \quad
  |v|_\eta = \sup_{x \neq y \in \Delta} \frac{|v(x) - v(y)|}{d^\eta(x,y)}
  \quad \text{and} \quad
  \|v\|_\eta = |v|_\infty + |v|_\eta
  .
\]

\subsection{Construction of Markov Young tower}

We define \(\cA\) as the set of all finite words in the alphabet \(\alpha\)
(not including the empty word). 
For \(w = a_0 \ldots a_{n-1} \in \cA\) we define
\[
  |w| = n
  \qquad \text{and} \qquad
  h_\cA(w) = \tau(a_0) + \cdots + \tau(a_{n-1})
  .
\]
Let
\[
  Y_w = 
  \{ y \in Y : T^k \in a_{k} 
  \text{ for all } 0 \leq k \leq n-1 \}
  .
\]

We use the measure \(\bP_\cA\) from the following lemma:
\begin{lemma}
  \label{lemma:disi}
  There exists a probability measure \(\bP_\cA\) on \(\cA\)
  and a disintegration
  \(m = \sum_{w \in \cA} \bP_\cA(w) m_w\), 
  where \(m_w\) are probability measures on \(Y\), such that
  for every \(w \in \cA\),
  \begin{itemize}
    \item \(m_w\) is supported on \(Y_w\),
    \item \((T^{h_\cA(w)})_* m_w = m\).
  \end{itemize}
  In addition,
  \begin{itemize}
    \item
      If there exist \(C_\tau > 0\) and \(\beta > 1\) such that
      \(m(\tau \geq \ell) \leq C_\tau \ell^{-\beta}\) for all \(\ell \geq 1\),
      then \(\bP_\cA(h_\cA \geq \ell) \leq C \ell^{-\beta}\) for all \(\ell \geq 1\),
      where the constant \(C\) continuously depends on \(C_\tau\), \(\beta\), \(\lambda\),
      \(K\) and \(\eta\).
    \item
      If there exist constants \(C_\tau > 0\) and \(\beta > 1\) such that
      \(\int \tau^\beta \, dm \leq C_\tau \),
      then \(\int h_\cA^\beta \, d\bP_\cA \leq C \),
      where the constant \(C\) continuously depends on \(C_\tau\), \(\beta\), \(\lambda\),
      \(K\) and \(\eta\).
    \item
      If there exist constants \(C_\tau > 0\), \(\beta > 0\) and \(\gamma \in (0,1]\)
      such that \(\int e^{\beta \tau^\gamma} \, dm \leq C_\tau\), then
      \(\int e^{\beta' h_\cA^\gamma} \, d\bP_\cA \leq C\), where the constants \(\beta' \in (0,\beta]\)
      and \(C > 0\) depend continuously on \(C_\tau\), \(\beta\), \(\gamma\), \(\lambda\),
      \(K\) and \(\eta\).
  \end{itemize}
\end{lemma}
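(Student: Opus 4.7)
The plan is to obtain $\bP_\cA$ as the excursion law between consecutive ``regeneration'' times in a Doeblin-type coupling of the induced map $T_Y$. The distortion hypothesis forces the transfer density $\psi_a := d(T_{Y,a})_*(m|_{\bar a})/dm$ of each branch to lie in $[D^{-1}m(\bar a),\, D m(\bar a)]$ on $Y$, where $D = e^{K(\diam Y)^\eta}$, and the uniform lower bound $\psi_a \ge D^{-1} m(\bar a)$ is exactly what lets us peel off a refresh piece of mass $D^{-1}$ at every step.

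I would first implement the splitting at a single step. The inequality $\psi_a \ge D^{-1}m(\bar a)$ is equivalent to
\[
  m\big|_{\bar a} = D^{-1} m(\bar a)\,(T_{Y,a})^{-1}_* m + (\text{nonnegative residual}),
\]
and summing over $a \in \alpha$ gives $m = D^{-1}\sum_a m(\bar a)(T_{Y,a})^{-1}_* m + (1-D^{-1})\mu^{\mathrm{res}}$ for a probability measure $\mu^{\mathrm{res}}$ on $Y$. Starting from $y_0 \sim m$ and iterating $y_{t+1} = T_Y(y_t)$, I attach to each step an independent uniform $U_t \in [0,1]$ and flag step $t$ as a refresh ($R_t = 1$) whenever $U_t \le p_t := D^{-1}/\phi_{a_t}(y_{t+1})$, where $a_t$ is the element of $\alpha$ containing $y_t$ and $\phi_a = \psi_a/m(\bar a)$. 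Iterated application of the Bayes identity $\bE[\phi_{a_t}(y_{t+1})^{-1} \mid y_{t+1}] = \sum_a \psi_a(y_{t+1})/\phi_a(y_{t+1}) = 1$ yields two key properties: (i) conditionally on $R_t = 1$, $y_{t+1} \sim m$ independently of the past, and (ii) $(R_t)_{t \ge 0}$ is i.i.d.\ $\mathrm{Bern}(D^{-1})$ marginally. Let $0 = t_0 < t_1 < \cdots$ be the refresh times, and set $W_j = a_{t_{j-1}} a_{t_{j-1}+1} \cdots a_{t_j-1}$; by (i) the process after each $t_j$ is an independent fresh copy, so the $W_j$'s are i.i.d.\ $\cA$-valued. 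Define $\bP_\cA$ as the law of $W_1$ and $m_w$ as the conditional law of $y_0$ given $W_1 = w$.

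The three structural properties then read off cleanly. The identity $m = \sum_w \bP_\cA(w) m_w$ is marginalization of the law of $y_0$ over $W_1$. The inclusion $\operatorname{supp} m_w \subset Y_w$ is automatic, since $W_1 = w = a_0 \cdots a_{n-1}$ forces $T_Y^k(y_0) \in \bar{a_k}$ for $0 \le k < n$, i.e.\ $y_0 \in Y_w$. The pushforward identity $(T^{h_\cA(w)})_* m_w = m$ is property (i) at the refresh time $t_1$: one has $T^{h_\cA(w)}(y_0) = T_Y^{|w|}(y_0) = y_{t_1}$, which conditionally on $W_1 = w$ is distributed as $m$ independently of $W_1$.

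For the moment bounds, write $h_\cA(W_1) = \sum_{t=0}^{T_1-1}\tau(a_t)$ with $T_1 = t_1$. The pointwise upper bound $\phi_a \le D$ gives $p_s \ge D^{-2}$, hence the deterministic tail
\[
  \bP(T_1 > n \mid (y_t)_{t \ge 0}) = \prod_{s<n}(1-p_s) \le (1-D^{-2})^n.
\]
Together with $T_Y$-invariance of $m$, Minkowski's inequality in $L^\beta$ yields
\[
  \|h_\cA\|_{L^\beta(\bP_\cA)} \le \sum_{t \ge 0}\|\tau(a_t)\mathbf{1}_{t<T_1}\|_{L^\beta} \le \|\tau\|_{L^\beta(m)}\sum_{t \ge 0}(1-D^{-2})^{t/\beta} < \infty,
\]
which gives the polynomial moment bound, and via Markov the polynomial tail. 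The stretched-exponential and exponential cases follow by an analogous telescoping, in which $\beta' \in (0, \beta]$ is chosen small enough that the growth of $\bE_m[\exp(\beta'\sum_0^{n-1}\tau(a_t)^\gamma)]$ along $T_Y$-trajectories is absorbed by the factor $(1-D^{-2})^n$. The main obstacle is that $T_1$ is correlated with the trajectory $(a_t)$, so $h_\cA(W_1)$ does not split into a product of independent pieces and a one-line renewal computation is unavailable; the remedy is to condition on the trajectory first and invoke the pointwise ``no refresh yet'' bound above, which decouples $T_1$ from the $\tau(a_t)$'s inside the excursion.
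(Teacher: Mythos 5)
There is a genuine gap in the construction, located exactly at the step you flagged as ``read off cleanly'': the pushforward identity \((T^{h_\cA(w)})_* m_w = m\). Your claim (i) --- that conditionally on \(R_t = 1\), the landing point \(y_{t+1}\) is distributed as \(m\) \emph{independently of the past} --- is correct only when you average over the itinerary \((a_0,\ldots,a_t)\); it fails once you also condition on the word \(W_1 = w\). Concretely, take \(w = w_0 w_1\) (so \(T_1 = 2\)). Changing variables \(y_0 \mapsto y_2 = T_{Y,w}(y_0)\) on \(Y_w\), with Jacobian density \(\psi_{w_0}(T_{Y,w_1}^{-1}(y_2))\,\psi_{w_1}(y_2)\) and writing \(\psi_a = m(\bar a)\phi_a\), one gets
\begin{equation*}
  \bP\bigl(W_1 = w_0 w_1,\; y_2 \in dy\bigr)
  = m(w_0)\,m(w_1)\,D^{-1}\bigl(\phi_{w_0}(T_{Y,w_1}^{-1}(y)) - D^{-1}\bigr)\,dm(y),
\end{equation*}
which is not proportional to \(dm(y)\) unless \(\phi_{w_0}\) is constant along the inverse branch. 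Thus \((T_Y^2)_* m_{w_0w_1} \neq m\). Summing this identity over \(w_0,w_1\) recovers \(\bP(T_1 = 2,\, y_2 \in dy) = D^{-1}(1-D^{-1})\,dm(y)\) --- that is the content of your Bayes identity, and it shows that \(y_{T_1}\sim m\) given only the \emph{length} \(T_1\). But given the \emph{full itinerary}, the factor \(\prod_{s<T_1-1}(1 - D^{-1}/\phi_{a_s}(y_{s+1}))\) from the ``no refresh yet'' events biases the law of \(y_{T_1}\), because those factors depend on \(y_{T_1}\) through the inverse branches. The root cause is that the Nummelin/Athreya--Ney split presupposes a Markov transition kernel with its own randomness, so a regeneration genuinely resets the future; here the forward orbit \(y_{t+1}=T_Y(y_t)\) is deterministic and all the randomness is in \(y_0\) and the auxiliary coins, so conditioning on the observed word sharpens the information about the landing point. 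Equivalently, \(W_1\) and \(y_{T_1}\) are not independent given \(T_1\), which is precisely what would be needed for the excursions \(W_j\) to be i.i.d.

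A secondary point: your Bayes identity \(\sum_a \psi_a \equiv 1\) and the \(L^\beta\) estimate invoke \((T_Y)_*m = m\). In this paper \(m\) is only a reference measure on \(Y\) with the distortion property; it is not assumed \(T_Y\)-invariant (the invariant measure is \(\nu\), and the theorem asserts \(\pi_*\bP_X = m\), not \(= \nu|_Y\)). So the lemma must be proved without that invariance. For comparison, the paper's own proof delegates to \cite{K17} (Lemma 4.3, Propositions 4.4--4.5 there), and the variant proved in Appendix~\ref{sec:kkk} proceeds by a transfer-operator decomposition (peeling constant multiples of \(1_{\Delta_0}\) off iterates \(L^n\psi\) using the renewal-type Proposition~\ref{prop-s}), which is a genuinely different, operator-theoretic route that does not rely on a forward regeneration scheme and does not need \(m\) to be invariant. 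Your moment estimates, conditional on the (incorrect) construction, are essentially sound, but the construction itself needs to be replaced.
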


\begin{remark}
  Our Lemma~\ref{lemma:disi} corresponds to \cite[Theorem~2]{Z09},
  where the disintegration of \(m\) is called a \emph{regenerative partition
  of unity.}
  For the ease of citation and explicit tail estimates, we refer to \cite{K17}.
\end{remark}

\begin{proof}[Proof of Lemma~\ref{lemma:disi}]
  Such a decomposition is constructed in 
  \cite[Section~4]{K17}. It is implicit in \cite{K17}
  that \(m_w\) is supported on \(Y_w\).

  We remark that in~\cite{K17}, the set \(\cA\) contains the empty word, while here
  we do not allow it. This, however, does not cause problems, because if \(w\) is
  the empty word, then \(\bP_\cA(w)\) is uniformly bounded away from \(1\)
  and \(m_w = m\). Thus the decomposition with the empty word translates to
  one without, with the same moment bounds.
\end{proof}

For the exactly exponential moments in Theorem~\ref{thm:joke}, we obtain a special
version of Lemma~\ref{lemma:disi}:

\begin{lemma}
  \label{lemma:ccnnjj}
  Suppose that \(\int e^{\beta \tau} \, dm < \infty\) with some \(\beta > 0\).
  Then the measure \(\bP_\cA\) in Lemma~\ref{lemma:disi} can be chosen so that 
  \[
    \bP_\cA(h_\cA = \ell) = 
    \begin{cases}
      \theta^{-1} (1-\theta) \theta^{\ell/N} , & \ell \in N \bN \\
      0, & \text{else}
    \end{cases}
  \]
  with some \(N \in \bN\) and \(0 < \theta < 1\).
\end{lemma}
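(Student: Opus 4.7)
The plan is to apply Lemma~\ref{lemma:disi} to obtain an auxiliary measure with exponential moments, and then use two successive refinements---concatenation of words, and Athreya-Ney-type splitting around a point mass---to sculpt $\bP_\cA$ so that its $h_\cA$-marginal is exactly the prescribed geometric law.

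The first ingredient is a concatenation principle. Given any measure $\bP^{(0)}$ on an alphabet $\cA^{(0)}$ together with the disintegration data of Lemma~\ref{lemma:disi}, for a tuple $w=(w_1,\ldots,w_k)\in(\cA^{(0)})^k$ I define $Y_w=Y_{w_1}\cap T^{-h^{(0)}(w_1)}(Y_{w_2})\cap\cdots$, $h_\cA(w)=\sum_i h^{(0)}(w_i)$, and the unique probability $m_w$ on $Y_w$ with $(T^{h_\cA(w)})_*m_w=m$. A short induction that chains $(T^{h^{(0)}(w_i)})_*m_{w_i}=m$ with $\sum_{w'}\bP^{(0)}(w')m_{w'}=m$ shows that on $\cA:=\bigsqcup_k(\cA^{(0)})^k$, any probability of the form $\bP_\cA(w_1,\ldots,w_k)=r_k\prod_j\bP^{(0)}(w_j)$, with $r_k\geq 0$ and $\sum_k r_k=1$, preserves the disintegration $\sum_w\bP_\cA(w)m_w=m$. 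This gives complete freedom to pick the concatenation-count distribution $(r_k)$.

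The second ingredient is an Athreya-Ney-style splitting to force $h_\cA$ into the geometric law. Apply Lemma~\ref{lemma:disi} to produce $\bP^{(0)}$ with $\int e^{\beta'h^{(0)}}\,d\bP^{(0)}<\infty$, and choose $N$ so that $p:=\bP^{(0)}(h^{(0)}=N)>0$; such $N$ exists after, if necessary, passing to a multiple of the gcd of $\mathrm{supp}(\tau)$. Decompose $\bP^{(0)}=p\mu+(1-p)\rho$ with $\mu=\bP^{(0)}(\cdot\mid h^{(0)}=N)$, and adjoin an independent Bernoulli$(\theta)$ coin at each step of the i.i.d.\ sequence $(W_i)_{i\geq 1}\sim\bP^{(0)}$. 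I define super-words as blocks that terminate at the first index $i$ where $W_i$ is drawn from $\mu$ (a regeneration) \emph{and} the coin reads ``stop''. Absorbing the coin flip into a countable alphabet yields a new alphabet and measure $\bP_\cA$; by the strong Markov property of the extended chain, the number of regenerations per super-word is Geometric$(1-\theta)$.

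The main obstacle is ensuring that the total roof per super-word is exactly $NG$ with $G\sim\mathrm{Geom}(1-\theta)$: between regenerations the walk $W_i$ can accumulate roof outside $N\bN$, yielding a compound sum rather than a pure geometric on $N\bN$. My remedy is a preprocessing step---inducing the Lemma~\ref{lemma:disi} construction on the subset of $Y$ whose first-return words have $h^{(0)}\in N\bN$---so that the base measure $\bP^{(0)}$ can be taken supported on $N\bN$ with a distinguished atom at $N$. After this preprocessing, the non-regeneration component $\rho$ contributes only roofs in $N\bN$ that are absorbed into the concatenation count, and super-words have roof exactly $NG$, producing $\bP_\cA(h_\cA=\ell)=\theta^{-1}(1-\theta)\theta^{\ell/N}$ for $\ell\in N\bN$. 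The disintegration of $m$ is inherited via the concatenation principle of the first step.
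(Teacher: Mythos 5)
Your overall strategy---Nummelin/Athreya--Ney-style regeneration on top of a Markov-type decomposition of $m$, with concatenation to tune the roof distribution---is the right family of ideas and is also what the paper does. The concrete gap is in the claim that ``super-words have roof exactly $NG$.'' The roof of a super-word is the \emph{compound} sum $\sum_{i=1}^{T} h^{(0)}(W_i)$, where $T$ is the time of the first regeneration that also draws a ``stop'' coin. Your preprocessing puts each $h^{(0)}(W_i)$ in $N\bN$, but between regenerations the increments are still $N,2N,3N,\ldots$ with whatever distribution Lemma~\ref{lemma:disi} happens to deliver, so the total roof is $N$ times a randomly stopped random walk, not $N$ times the number of regenerations. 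A compound sum of i.i.d.\ increments (with a geometric number of terms) is geometric \emph{only if the increments themselves are (shifted) geometric}; treating Lemma~\ref{lemma:disi} as a black box gives you no control over that increment law, and neither concatenation nor splitting the probability mass of a fixed word can change its roof value. So ``absorbed into the concatenation count'' is exactly the step that does not go through.

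The paper avoids this by not using Lemma~\ref{lemma:disi} as a black box at all: it redoes the Nummelin-type splitting directly on the transfer operator $L$, and \emph{chooses} the excess-life sequence $p_n$ to be geometric on $N\bN_0$ (with an extra atom $p_{-1}$ controlling the number of regeneration blocks), subject to the feasibility constraint of Lemma~\ref{lem-recur}(b) and the partial-sum domination in Proposition~\ref{prop-E}/Proposition~\ref{prop-s}. With that engineering, the roof $r/N$ of a word in $\bW$ is a sum of a $\Geom(p_{-1})$ number of i.i.d.\ $1+\Geom(\theta_1)$ increments, and Proposition~\ref{prop:geoi} is exactly the statement that such a compound geometric sum is again (point mass plus) geometric. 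Your proposal has no analogue of the freedom to pick $p_n$ geometric, which is where the argument actually lives; you would need to reconstruct the decomposition at the level of the transfer operator, not just postprocess the output of Lemma~\ref{lemma:disi}. As a secondary point, ``inducing the Lemma~\ref{lemma:disi} construction on the subset of $Y$ whose first-return words have $h^{(0)}\in N\bN$'' is not a well-defined operation on $Y$ (it references the word decomposition rather than a subset of $Y$), and you would still need to re-verify the disintegration hypotheses for the induced object; but this is a minor issue next to the compound-sum problem.
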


Our proof of Lemma~\ref{lemma:ccnnjj} uses a rather delicate technical
adaptation of the argument in \cite[Section 4]{KKM16exp}. It is carried
out in Appendix~\ref{sec:kkk}.

Let \(\bP_\cA\) and \(\{m_w\}\) be as in Lemmas~\ref{lemma:disi} or ~\ref{lemma:ccnnjj}.
Let \(\xi = \lambda^{-1}\).
According  to Remark~\ref{rmk:defBYT}, \(\cA\), \(\bP_\cA\), \(h_\cA\) and \(\xi\)
define a Markov Young tower \(f \colon \Delta \to \Delta\).
To prove Theorem~\ref{thm:joke}, it remains to construct 
the semiconjugacy \(\pi \colon \Delta \to \Lambda\).

\subsection{Semiconjugacy}
\label{sec:semic}

Let \(\iota \colon Y \to \alpha^{\bN_0}\) be the natural embedding,
\(\iota(y) = (a_0, a_1, \ldots)\) if \(F^k(y) \in a_k\)
for all \(k\). (Technically, \(\iota\) is defined on a full measure subset of \(Y\).)
The space \(\alpha^{\bN_0}\) is supplied with the product topology and Borel sigma algebra.

\begin{remark}
  The map \(\iota\) is measurable and injective by construction;
  in addition we assumed that \(\iota(Y)\) is measurable in \(\alpha^{\bN_0}\).
  It is straightforward to check that \(\iota^{-1}\) is continuous on \(\iota(Y)\),
  and that \(\iota(A)\) is measurable for all measurable \(A \subset Y\).
  Hence \(\iota\) is \emph{bimeasurable:} both images and preimages of
  measurable sets are measurable.
\end{remark}

Let \(m_\alpha = \iota_* m\). This is a Borel probability measure on \(\alpha^{\bN_0}\)
with \(m_\alpha(\alpha^{\bN_0} \setminus \iota(Y)) = 0\).

For words \(w_0, \ldots, w_n \in \cA\), let
\(w_0 \cdots w_n\) denote their concatenation.
Then \(|w_0 \cdots w_n| = |w_0| + \cdots + |w_n|\)
and \(h_\cA(w_0 \cdots w_n) = h_\cA(w_0) + \cdots + h_\cA(w_n)\).

For \(x= (w_0, w_1, \ldots) \in X\), let
\(\pi_\alpha (x) \in \alpha^{\bN_0}\) denote the sequence of elements of
\(\alpha\) obtained by concatenating all \(w_k\), \(k \geq 0\).
It is clear that thus defined \(\pi_\alpha \colon X \to \alpha^{\bN_0}\)
is continuous.

\begin{proposition}
  \label{prop:sjhy}
  \((\pi_\alpha)_* \bP_X = m_\alpha\).
\end{proposition}

\begin{proof}
  Recall that we have the disintegration
  \( m = \sum_{w \in \cA} \bP_\cA(w) m_w \).

  Let \(w_0 \in \cA\). Since \(F^{|w_0|} \colon Y_{w_0} \to Y\) is
  a bijection and \(F^{|w_0|}_* m_{w_0} = m\), we can write
  \(m_{w_0} = \sum_{w_1 \in \cA} \bP_\cA(w_1) m_{w_0, w_1}\),
  where \(m_{w_0, w_1}\)
  are probability measures supported on \(Y_{w_0 w_1}\) such that
  \(F^{|w_0 w_1|}_* m_{w_0,w_1} = m\).
  Continuing with \(m_{w_0, w_1}\) and further recursively,
  we obtain for each \(n \geq 1\) a disintegration
  \[
    m = \sum_{w_0, \ldots, w_n \in \cA} \bP_\cA(w_0) \cdots \bP_\cA(w_n) m_{w_0, \ldots, w_n}
    ,
  \]
  where \(m_{w_0, \ldots, w_n}\) are probability measures supported on
  \(Y_{w_0 \cdots w_n}\) such that
  \(
    F^{|w_0 \cdots w_n|}_* m_{w_0, \ldots, w_n} = m
  \).
  
  Taking images under \(\iota \colon Y \to \alpha^{\bN_0}\),
  we obtain a similar disintegration in \(\alpha^{\bN_0}\):
  \[
    m_\alpha 
    = \sum_{w_0, \ldots, w_n \in \cA} \bP_\cA(w_0) \cdots \bP_\cA(w_n) m_{\alpha; \, w_0, \ldots, w_n}
    ,
  \]
  where \(m_{\alpha; \, w_0, \ldots, w_n} = \iota_* m_{\alpha; \, w_0, \ldots, w_n}\)
  are probability measures supported on the cylinders \(\alpha^{\bN_0}_{w_0 \cdots w_n}\)
  with
  \[
    \alpha^{\bN_0}_w
    = \{ (a_0, a_1, \ldots) \in \alpha^{\bN_0} : 
    a_0 \ldots a_{|w|-1} = w \}
    .
  \]

  Let \(w \in \cA\) and \(n = |w|\). Then
  for all \(w_0, \ldots, w_n \in \cA\), either
  \(\alpha^{\bN_0}_{w_0 \cdots w_n} \subset \alpha^{\bN_0}_w\)
  or \(\alpha^{\bN_0}_{w_0 \cdots w_n} \cap \alpha^{\bN_0}_w = \emptyset\).
  Thus
  \[
    m_\alpha(\alpha^{\bN_0}_w)
    = \sum_{\substack{w_0, \ldots, w_n \in \cA : \\ 
                      \alpha^{\bN_0}_{w_0 \cdots w_n} \subset \alpha^{\bN_0}_w 
           }} \bP_\cA(w_0) \cdots \bP_\cA(w_n)
    = \bP_X(\pi_\alpha^{-1} ( \alpha^{\bN_0}_w ) )
    .
  \]

  Thus \((\pi_\alpha)_* \bP_X\) agrees with \(m_\alpha\) on all cylinders in
  \(\alpha^{\bN_0}\). By Carath\'eodory's extension theorem,
  \((\pi_\alpha)_* \bP_X = m_\alpha\).
\end{proof}

Let \(\pi_X \colon X \to Y\), \(\pi_X = \iota^{-1} \circ \pi_\alpha\).

\begin{proposition}
  \label{prop:sjjs}
  \(\pi_X\) is well defined \(\bP_X\) almost everywhere on \(X\)
  and is measurable. Also, \((\pi_X)_* \bP_X = m\).
\end{proposition}

\begin{proof}
  The map \(\iota\) is injective, which allows us to define \(\pi_X\)
  on \(X' = (\pi_\alpha^{-1} \circ \iota )(Y)\).

  Recall that \(\iota(Y)\) is measurable in \(\alpha^{\bN_0}\)
  and \(m_\alpha(\iota(Y)) = 1\). The map \(\pi_\alpha\) is continuous, so
  \(X'\) is a measurable subset of \(X\) and,
  by Proposition~\ref{prop:sjhy}, \(\bP_X(X') = 1\).
  Hence \(\pi_X\) is defined almost everywhere.

  Using the bimeasurability of \(\iota\) and Proposition~\ref{prop:sjhy},
  for every measurable \(A \subset Y\),
  \[
    \bP_X(\pi_X^{-1}(A)) 
    = \bP_X( (\pi_\alpha^{-1} \circ \iota)(A) )
    = m_\alpha(\iota(A))
    = m(A)
    .
  \]
  In other words,
  \((\pi_X)_* \bP_X = m\).
\end{proof}

\begin{remark}
  Further we silently ignore the zero measure subset of \(X\), on which \(\pi_X\)
  is not defined, and the corresponding subset of \(\Delta\), which also has
  zero measure.
\end{remark}

Define \(\pi \colon \Delta \to \Lambda\) by
\begin{equation}
  \label{eq:mnr}
  \pi((w_0, w_1, \ldots), \ell) = T^\ell(\pi_X(w_0, w_1, \ldots)).
\end{equation}
Then \(\pi_X \colon \Delta_0 \to Y\) is a restriction of \(\pi\).

\begin{proposition}
  \(\pi\) is Lipschitz: for all \(a, b \in \Delta\),
  \[
    d_\Lambda(\pi(a), \pi(b)) \leq C_\Lambda \, d(a,b)
    ,
  \]
  where \(C_\Lambda = \lambda K_\tau \,\diam \Lambda \).
\end{proposition}

\begin{proof}
  Let \(a = (x_1, j)\) and \(b=(x_2,k)\), where
  \[
    x_1 = (w_{1,0}, w_{1,1}, \ldots)
    \quad \text{and} \quad
    x_2 = (w_{2,0}, w_{2,1}, \ldots)
    .
  \]
  If \(j \neq k\) or \(w_{1,0} \neq w_{2,0}\),
  then \(d(a, b) = 1\) and the statement is trivial.
  
  Suppose now that \(j = k\) and \(w_{1,0} = w_{2,0}\). 
  Let \(n = s(x_1, x_2)\). Note that \(n \geq 1\) and
  \[
    j = k 
    < h(x_1) = h(x_2) 
    = h_\cA(w_{1,0}) = h_\cA(w_{2,0})
    .
  \]
  Observe that \(\pi_X(x_i) \in Y_{w_{1,0} \cdots w_{1,n-1}}\) and
  \(F(\pi_X(x_i)) \in Y_{w_{1,1} \cdots w_{1,n-1}}\)
  for \(i=1,2\).
  Also, \(\diam Y_{w_{1,1} \cdots w_{1,n-1}} \leq \lambda^{-(n-1)} \, \diam Y\).
  Then
  \begin{align*}
    d_\Lambda(\pi(a),\pi(b))
    & = d_\Lambda \bigl( T^j(\pi_X(x_1)), T^j(\pi_X(x_2)) \bigr)
    \leq K_\tau \, \diam Y_{w_{1,1} \cdots w_{1,n-1}}
    \\ & \leq K_\tau \lambda^{-(n-1)} \, \diam Y
    = \lambda K_\tau \, \diam Y \, d(a,b).
  \end{align*}
\end{proof}

\begin{proposition}
  \label{prop:dnuy}
  \(T \circ \pi  = \pi \circ f \).
\end{proposition}

\begin{proof}
  Suppose that \(a = (x, \ell) \in \Delta\), and
  \(x = (w_0, w_1, \ldots)\).
   If \(\ell < h(x) - 1\), then
  \(f (a) = (x, \ell + 1)\) and
  \[
    \pi(f(a))
    = T^{\ell + 1} (\pi_X(x))
    = T (\pi(a))
    .
  \]
  If \(\ell = h(x) - 1\), then
  \[
    \pi(f(a))
    = \pi_X ( f_X (x) ) 
    = F(\pi_X(x)) 
    = T^{\ell + 1}(\pi_X(x))
    = T (\pi(a))
    .
  \]
  Thus
  \(
    \pi(f(a)) = T (\pi(a))
  \).
\end{proof}

\begin{proposition}
  \label{prop:ngbfg}
  \(\pi_* \bP = \nu\).
\end{proposition}

\begin{proof}
  We use the fact that \(\nu\) is the unique \(T\)-invariant ergodic
  probability measure on \(\Lambda\), with respect to which \(m\) is absolutely
  continuous.

  Since \(\bP\) is \(f\)-invariant and ergodic, it follows from Proposition~\ref{prop:dnuy}
  that \(\pi_* \bP\) is \(T\)-invariant and ergodic. 
  Since \(\bP_X\) is absolutely continuous with respect to \(\bP\)
  and \(\pi_* \bP_X = m\), using Proposition~\ref{prop:sjjs}
  we obtain that \(m\) is absolutely continuous with respect to \(\pi_* \bP\).
  Thus \(\pi_* \bP = \nu\).  
\end{proof}

\section{Proof of Theorem~\ref{thm:dura}}
\label{sec:dura}

\subsection{ASIP for Bernoulli shift}

Suppose that \(\{\eps_k\}_{k \in \bZ}\)
is a sequence of independent identically distributed
random variables, and \(X_k\) are real valued random variables
with mean zero given by
\[
  X_k = G(\ldots, \eps_{k-1}, \eps_k, \eps_{k+1}, \ldots)
\]
for some function \(G\).

Let \(\{\eps'_k\}\) be an independent copy of
\(\{\eps_k\}\) and for \(\ell \in \bZ\) define \(\{\eps_k^\ell\}_{k \in \bZ}\)
by
\[
  \eps_k^\ell = \begin{cases}
    \eps_k, & k \neq \ell, \\
    \eps'_\ell, & k = \ell.
  \end{cases}
\]
Define
\[
  X_k^\ell = G(\ldots, \eps_{k-1}^\ell, \eps_k^\ell, \eps_{k+1}^\ell, \ldots)
  .
\]
Let \(p > 4\),
\[
  \delta_{\ell,p}
  = \|X_0 - X_0^\ell\|_p
  \qquad \text{and} \qquad
  \Theta_{\ell,p} = \sum_{|k| \geq \ell} \delta_{k,p}
  ,
\]
where \(\| \cdot \|_p = \bigl(\bE |\cdot|^p \bigr)^{1/p}\).

We use the following result 
\cite[Theorem~2.1]{BLW14} (with \cite[Corollary~2.1]{BLW14}
to verify the assumptions):

\begin{theorem}
  \label{thm:BLW}
  If \(\|X_k\|_p < \infty\) and
  \(\Theta_{\ell,p} = o(\ell^{-p})\),
  then the partial sum process \(\sum_{k=0}^{n-1} X_k\)
  satisfies the ASIP with rate \(o(n^{1/p})\).
\end{theorem}

\begin{remark}
  Theorem~\ref{thm:BLW} is proved in \cite{BLW14} under a more relaxed
  condition on \(\Theta_{\ell,p}\). We use intentionally a suboptimal
  but easy to state result.
\end{remark}

\subsection{Construction of Bernoulli shift}
\label{sec:cbs}

Suppose that \(f \colon \Delta \to \Delta\) is a Markov Young tower
as in Section~\ref{sec:BYT} with 
\[
  \bP_\cA(h_\cA = n) = 
  \begin{cases}
    \theta (1-\theta)^{-1} (1-\theta)^{n/N}, & n \in \{N, 2N, 3N, \dots\} \\
    0, & \text{else}
  \end{cases}
\]
with \(N \in \bN\) and \(0 < \theta < 1\).
Let \(v \colon \Delta \to \bR\) be a centered H\"older observable
and \(v_n = \sum_{k=0}^{n-1} v \circ f^k\) be the corresponding
random process on \((\Delta, \bP)\).

By Theorem~\ref{thm:joke}, to prove Theorem~\ref{thm:dura}
it is enough to show the ASIP for \(v_n\).

The map \(f\) is \(N\)-periodic. For simplicity we assume that \(N = 1\).
We show how to remove this assumption in Subsection~\ref{sec:N}.

In the rest of this subsection we construct a suitable Bernoulli shift
\(\sigma \colon D \to D\) with a measure preserving
semiconjugacy \(g \colon D \to \Delta\). 
The random process \(\sum_{k=0}^{n-1} X_k\) with
\(X_k = v \circ g \circ \sigma^k\)
has the same distribution as \(v_n\).
If \(\{\eps_k\}\) are the coordinates of \(D\), they are
independent and identically distributed, and
\(X_k = (v \circ g) (\ldots, \eps_{k-1}, \eps_k, \eps_{k+1}, \ldots)\).
This sets up a ground for the application of Theorem~\ref{thm:BLW}.

Let \((\Omega, \bP_\Omega)\) be a probability space supporting 
random variables \(A_n \colon \Omega \to \cA\), \(n \geq 1\),
such that for \(a \in \cA\),
\[
  \bP(A_n = a) = 
  \begin{cases}
    0, & h_\cA(a) \neq n, \\
    \frac{\bP_\cA(a)}{\bP_\cA(\bigcup \{ a \in \cA : h_\cA(a) = n\})},
    & h_\cA(a) = n
  \end{cases}
  .
\]
That is, \(A_n\) is a random element of \(\cA\) chosen among those with
\(h_\cA = n\) with respect to the appropriately conditioned measure \(\bP_\cA\).

Let \(Z = \{0,1\}\) and 
\(\bP_Z\) be the probability measure on \(Z\) given by 
\(\bP_Z(0) = 1-\theta\) and \(\bP_Z(1) = \theta\).

Define \(D = (\Omega \times Z)^\bZ\) with the product probability measure
\(\bP_D = (\bP_\Omega \times \bP_Z)^\bZ\).
Let \(\eps_k = (\omega_k, z_k)\) be the coordinates in \(D\) and
\(\sigma \colon D \to D\) be the left shift.

Let 
\[
  t_0 = \sup \{k \leq 0 : z_k = 1\}
  \qquad \text{and} \qquad
  t_n = \inf \{k > t_{n-1} : z_{k} = 1\},
  \quad n \geq 1.
\]
Note that \(t_n\) are finite \(\bP_D\)-almost surely.

Define \(g \colon D \to \Delta\) by 
\( g(\{\eps_k\}) = (y, -t_0)\), where
\(
  y =
  (
    A_{t_1 - t_0}(\omega_{t_0}), 
    A_{t_2 - t_1}(\omega_{t_1}),
    \ldots
  )
\).
Observe that \(g\) is a probability measure preserving semiconjugacy between
\(\sigma \colon D \to D\) and \(T \colon \Delta \to \Delta\).

\subsection{Weak dependence}

Here we verify the assumptions of Theorem~\ref{thm:BLW}.
As above, we set
\[
  X_k = (v \circ g) (\ldots, \eps_{k-1}, \eps_k, \eps_{k+1}, \ldots)
  .
\]
Let \(p > 4\). The observable \(v\) is H\"older continuous,
thus \(\|X_k\|_p \leq \|X_k\|_\infty < \infty\).
It remains to prove that \(\Theta_{\ell,p} = o(\ell^{-p})\).

\begin{proposition}
  \label{prop:najji}
  There exists \(0 < \theta_\delta < 1\) such that
  \(\delta_{\ell,p} = O\bigl(\theta_\delta^{|\ell|}\bigr)\).
\end{proposition}

\begin{proof}
  Let \(\ell \in \bZ\),
  \[
    x = ((w_0, w_1, \ldots), r) = g(\{\eps_k\})
    \quad \text{and} \quad 
    x^\ell = ((w_0^\ell, w_1^\ell, \ldots), r^\ell) = g(\{\eps_k^\ell\})
    .
  \]

  Suppose first that \(\ell \geq 1\). Let \(c_\ell = \sum_{j=1}^{\ell-1} z_j\).
  Then \(w_j = w_j^\ell\) for all \(0 \leq j \leq c_\ell-1\).
  If \(c_\ell \geq 1\), then \(r = r^\ell\)
  and \(d(x,x^\ell) \leq \xi^{c_\ell}\).
  If \(c_\ell = 0\), then \(d(x, x^\ell) \leq 1\).
  In either case, 
  \[
    d(x,x^\ell) \leq \xi^{c_\ell}
    .
  \]

  Since \(\{z_k\}\) are independent identically distributed,
  \[
    \bE d(x, x^\ell)^p
    \leq \bE \xi^{p c_\ell}
    = \bigl( \bE \xi^{p z_3}\bigr)^{\ell-1}
    = \bigl( 1-\theta + \theta \xi^p \bigr)^{\ell-1}
    .
  \]
  Since \(v\) is H\"older continuous,
  \(|X_0 - X_0^\ell| \leq |v|_\eta d(x,x^\ell)^\eta\)
  and the result for \(\ell \geq 1\) follows.
  
  Suppose now that \(\ell \leq 0\). Then \(x \neq x^\ell\)
  only when \(t_0 \leq \ell\).
  The result follows from H\"older continuity of \(v\) and
  \[
    \bP(t_0 \leq \ell) 
    = \bP(z_0 = z_{-1} = \cdots = z_{\ell-1} = 0) 
    = (1-\theta)^\ell.
  \]
\end{proof}

Finally, \(\Theta_{\ell,p}\) decays exponentially in \(\ell\),
because so does \(\delta_{\ell,p}\).
The proof of Theorem~\ref{thm:dura} is complete.

\subsection{Periodic tower}
\label{sec:N}

In Subsection~\ref{sec:cbs} we assumed that the tower \(f \colon \Delta \to \Delta\)
is aperiodic, namely that \(N = 1\). Here we give a sketch of proof for \(N > 1\).

Let
\[
  \Delta_N = \{(x,\ell) \in \Delta : \ell = 0 \pmod N \}
  .
\]
We supply \(\Delta_N\) with a probability measure \(\bP_N\), which
is a (normalized) restriction of \(\bP\).
Define a projection \(\pi_N \colon \Delta \to \Delta_N\),
\(\pi_N (x, \ell) = \big(x, N \big\lfloor \frac{\ell}{N}\big\rfloor\big)\).
Observe that \((\pi_N)_* \bP = \bP_N\).

Let 
\[
  u_n = \sum_{k=0}^{n-1} u \circ f^{kN}
  \qquad \text{with} \qquad
  u = \sum_{k=0}^{N-1} v \circ f^k
\]
be a process on the probability space \((\Delta_N, \bP_N)\).

Since \(\pi_N\) is measure preserving and 
\(|v_n - u_{\lfloor n/N \rfloor} \circ \pi_N| \leq 2 N |v|_\infty \),
the processes \(v_n\) and \(u_n\) are naturally defined on a common probability
space with \(v_n = u_{\lfloor n/N \rfloor} + O(1)\) almost surely.

By the method which works for \(N = 1\),
we show the ASIP for the process \(u_n\) on
the probability space \((\Delta_N, \bP_N)\).
So, there exists a Brownian motion \(W_n\) with
\(u_n = W_n + o(n^\eps)\) almost surely for every \(\eps > 0\).

Thus, almost surely and for every \(\eps > 0\),
\[
  v_n
  = W_{\lfloor n/N \rfloor} + o(n^\eps)
  = W'_{n} + o(n^\eps)
  ,
\]
where \(W'_n = W_{n/N}\) is a Brownian motion.
We used that \(W_{n/N} - W_{\lfloor n/N \rfloor} = O(\log n)\)
almost surely.
This is the desired ASIP for \(v_n\).

\appendix

\section{Proof of Lemma~\ref{lemma:ccnnjj}}
\label{sec:kkk}

Our argument is based on \cite[Section 4]{KKM16exp}, and here
we work in their notations, which are different from the rest of
the paper.

In this section, \(T \colon \Lambda \to \Lambda\) is a nonuniformly
expanding map as in Section~\ref{sec:YT}, 
\(F \colon Y \to Y\) is the induced map
and \(f \colon \Delta \to \Delta\) is
the Young tower,
\begin{align*}
  \Delta 
  & = \{ (y,\ell) \in Y \times \bZ : 0 \leq \ell < \tau(y) \}
  ,
  \\ f(y,\ell) 
  & = \begin{cases} (y, \ell+1), & \ell < \tau(y)-1, \\ (Fy, 0), & \ell=\tau(y)-1. \end{cases}
\end{align*}
Let \(\bar{\tau} = \int_Y \tau \, dm\).
Let \(m_\Delta\) be the probability measure on \(\Delta\) given by
\(
  m_\Delta(A \times \{\ell\}) = \bar{\tau}^{-1} m(A)
\)
for all \(\ell \geq 0\) and $A \subset \{y\in Y:\tau(y)\ge \ell+1\}$.

Let \(L \colon L^1(m_\Delta) \to L^1(m_\Delta)\) be the transfer operator corresponding
to \(f\) and \(m_\Delta\), so \(\int L \phi \, \psi \, dm_\Delta = \int \phi \, \psi \circ f \, dm\)
for all \(\phi \in L^1\) and \(\psi \in L^\infty\).

Without loss of generality we assume that \(f\) is mixing
(otherwise we switch to a power of \(f\)
which is mixing).

\begin{remark}
  The proof of decay of correlations in \cite{KKM16exp} is based on a construction of
  a probability space \((\bW, \bP_\bW)\) and a random variable \(r \colon \bW \to \bN\) such that
  each sufficiently regular observable \(\psi \colon \Delta \to [0,\infty)\)
  with \(\int \psi \, dm_\Delta = 1\) can be decomposed into a sum
  \(\psi = \sum_{w \in \bW} \bP_\bW(w) \psi_w\) with \(\int \psi_w \, dm_\Delta = 1\)
  and \(L^{r(w)} \psi_w = \bar{\tau} 1_{\Delta_0}\).
  In particular, this applies to \(\psi = \bar{\tau} 1_{\Delta_0}\).

  The distribution of \(r\) depends on the tails 
  \(m(\tau > n)\). There is quite a lot of flexibility
  in the construction. We show that if the tails decay exponentially,
  we can construct \(r\) distributed geometrically up to a period,
  as required for Lemma~\ref{lemma:ccnnjj}.
  Moreover, we can take \(\bW = \cA\), \(r = h_\cA\)
  and ensure that the observables \(\psi_w\) are supported on the
  respective \(Y_w \times \{0\}\).
  This yields the desired result, with \(m_w\) given by the densities \(\psi_w\).
\end{remark}

\begin{remark}
  Unfortunately, there is no easy way to point out what needs to be changed
  in \cite{KKM16exp}. We present a complete proof, referring to \cite{KKM16exp}
  in the proofs where possible.
\end{remark}

Let \(\Delta_\ell = \{(y,k) \in \Delta : k = \ell\}\).
Recall that \(\eta \in (0,1]\) is the exponent in~\eqref{eq:dist}.
For \(\psi \colon \Delta \to [0,\infty)\), define
\[
  |\psi|_{\eta,\ell} = \sup_{n\geq 0} \sup_{(y,n)\neq (y',n) \in \Delta_n} 
  \frac{|\log \psi(y,n)- \log \psi(y',n)|}{d(y,y')^\eta},
\]
where \(\log 0 = -\infty\) and \(\log 0 - \log 0 = 0 \).
Note that for a countable collection \(\psi_k\) of nonnegative functions,
\(\bigl|\sum_k \psi_k\bigr|_{\eta, \ell} \leq \max_k |\psi_k|_{\eta, \ell}\).

For \(a \in \alpha\), let
\(
  S_a
  = \{ (y, k) \in \Delta : y \in a \text{ and } k = \tau(y)-1 \}
\),
and let \(\varkappa\) be the partition of \(\Delta\) generated by \(\{S_a\}_{a \in \alpha}\)
and \(\{\Delta_\ell\}_{\ell \geq 0}\). Let \(\varkappa^n = \vee_{k=0}^{n-1} f^{-k} \varkappa\).
Then \(\varkappa^0\) is the trivial partition, and for every \(n \geq 1\) and \(a \in \varkappa^n\),
there exists \(\ell \geq 0\) such that \(f^n \colon a \to \Delta_\ell\) is a bijection.

Fix constants $R>0$ and $\xi \in (0,e^{-R})$ such that
$ R (1-\xi e^R) \geq K + \lambda^{-1} R$.

\begin{proposition}
  \label{prop-duude}
  Suppose that $\psi:\Delta\to[0,\infty)$ with \(|\psi|_{\eta, \ell} \leq R\).
  Let \(n \geq 1\), \(a \in \varkappa^n\) and \(\psi_a = \psi 1_a\). Then
  \begin{enumerate}[label=(\alph*)]
    \item
      \(
        e^{-R} \bar{\tau} \int_{\Delta_0} \psi \, dm_\Delta  
        \leq \psi \, 1_{\Delta_0} 
        \leq e^{ R} \bar{\tau} \int_{\Delta_0} \psi \, dm_\Delta .
      \)
    \item \(|L^n \psi_a|_{\eta, \ell} \leq R\).
    \item If \(t \in [0, \xi]\), then
      \(
        \psi_a' =
        L^n \psi_a - t \, \bar{\tau} \int_{\Delta_0} L^n \psi_a \, dm_\Delta \, 1_{\Delta_0}
      \)
      is nonnegative and \(|\psi_a'|_{\eta, \ell} \leq R\).
  \end{enumerate}
\end{proposition}

\begin{proof}
  This is a minor modification of \cite[Proposition~4.1]{KKM16exp}.
\end{proof}

Let \(\cR\) be the set of observables \(\psi : \Delta \to [0,\infty)\)
such that 
\(|\psi|_\infty \leq e^R \bar{\tau} \int_\Delta \psi \, dm_\Delta\) and
\(|\psi|_{\eta, \ell} \leq R\).

For \(n \geq 0\), let \(\cR^n\) denote the set of observables
\(\psi \colon \Delta \to [0,\infty)\) such that \(L^n \psi \in \cR\) and
\(|L^n (\psi 1_a)|_{\eta,\ell} \leq R\) for every \(a \in \varkappa^n\).

\begin{corollary} \label{cor-A} \ 
  \begin{itemize}
    \item[(a)] If \(\psi \colon \Delta \to [0,\infty)\) is supported on
      \(\Delta_0\) and \(|\psi|_{\eta, \ell} \leq R\), then
      \(\psi \in \cR\).
    \item[(b)] If \(\psi \in \cR\), then \(L \psi \in \cR\).
    \item[(c)] If \(\psi \in \cR^n\), then \(\psi \in \cR^k\) for all \(k \geq n\).
    \item[(d)] If \(\psi, \psi' \in \cR^n\) and \(t \geq 0\), then
      \(\psi + \psi'\) and \(t \psi\) belong in \(\cR^n\).
  \end{itemize}
\end{corollary}

\begin{proof}
  See \cite[Corollary~4.2]{KKM16exp}.
\end{proof}

\begin{lemma}
  \label{lem-recur}
  There exist \(N \geq 1\) and \(\eps > 0\) such that
  \begin{enumerate}[label=(\alph*)]
    \item
      \(
        \int_{\Delta_0} \psi \, dm_\Delta 
        \geq \eps \int_\Delta \psi \, dm_\Delta
      \)
      for all \(\psi \in L^N \cR\),
    \item
      \(
        (1-\xi \eps) \bigl( \frac{1-\xi}{1-\xi \eps} \bigr)^{n}
        \geq e^R \bar{\tau} m_\Delta \bigl( \bigcup_{\ell=Nn}^\infty \Delta_\ell \bigr)
      \)
      for all \(n \geq 1\).
  \end{enumerate}
\end{lemma}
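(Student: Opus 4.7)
For part~(b), Markov's inequality applied to the exponential moment hypothesis gives $m(\tau>\ell)\le C_0 e^{-\beta\ell}$, so
\[
  e^R\bar\tau\, m_\Delta\!\Bigl(\bigcup_{\ell\ge Nn}\Delta_\ell\Bigr) \;=\; e^R\sum_{\ell\ge Nn} m(\tau>\ell) \;\le\; C_1 e^{-\beta N n}.
\]
Setting $\rho=(1-\xi)/(1-\xi\eps)<1$, I would choose $N$ large enough (in terms of $\beta$, $\xi$, $\eps$ and $C_1$) so that both $e^{-\beta N}\le\rho$ and $C_1 e^{-\beta N}\le 1-\xi$. The second condition is precisely the claim at $n=1$ (where the right-hand side equals $1-\xi$); the first makes the left-hand side decay at least as fast as the right, so a one-line induction delivers all $n\ge 1$. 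Note that $N$ can only be pinned down once $\eps$ is supplied by part~(a).

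Part~(a) is the bulk of the work. The identity
\[
  \int_{\Delta_0}L^N\tilde\psi\,dm_\Delta \;=\; \int_{f^{-N}(\Delta_0)}\tilde\psi\,dm_\Delta
\]
reduces the claim to showing that $f^{-N}(\Delta_0)$ captures a uniform positive fraction of the mass of every $\tilde\psi\in\cA$. Since $m_\Delta(f^{-N}(\Delta_0))=\bar\tau^{-1}$ is fixed, the only obstruction is concentration of $\tilde\psi$ away from $f^{-N}(\Delta_0)$. My plan is: (i)~use the sup bound $|\tilde\psi|_\infty\le e^R\bar\tau\int\tilde\psi\,dm_\Delta$ together with the exponential tails of $\tau$ to show that the mass of $\tilde\psi$ on levels $\ell\ge M$ is small once $M$ is large; (ii)~work on the ``low'' part $\bigcup_{\ell<M}\Delta_\ell$ using the partition $\varkappa^N$, whose cells $a$ are each mapped bijectively onto some $\Delta_\ell$ by $f^N$, to isolate those cells that return to $\Delta_0$ at step~$N$. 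For such ``good'' cells the mass $\int_a\tilde\psi$ contributes directly to $\int_{f^{-N}(\Delta_0)}\tilde\psi$, and the renewal structure of the tower (mass at level $\ell$ visits $\Delta_0$ every $\tau$-block) guarantees, for $N$ large relative to $M$, that a uniformly positive fraction of the low-level cells are good.

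\textbf{Main obstacle.} The delicate point is the quantitative control in step~(ii): producing $\eps>0$ that is uniform over $\tilde\psi\in\cA$ and that does not deteriorate when $N$ is later chosen large in part~(b). This requires iterating the coupling extraction of Proposition~\ref{prop-duude}(c) --- which ensures that after subtracting a base-uniform contribution the remainder stays in $\cA$ and can itself be iterated --- along successive cells of $\varkappa^N$, and combining the result with the exponential tail estimate from~(i). This is the technical adaptation of~\cite[Section~4]{KKM16exp} that the text preceding Lemma~\ref{lemma:ccnnjj} promises, and I expect it to be the most intricate step of the entire proof.
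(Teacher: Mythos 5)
Your treatment of part~(b) is correct and matches the paper's (very brief) argument: Markov's inequality gives exponential decay of $m(\tau>\ell)$, the geometric sum then bounds $e^R\bar\tau\, m_\Delta(\cup_{\ell\ge Nn}\Delta_\ell)$ by $C_1 e^{-\beta Nn}$, and choosing $N$ large makes the right side decay faster than the left from $n=1$ onward. One small simplification you could make: since $\rho=(1-\xi)/(1-\xi\eps)\ge 1-\xi$ for any $\eps\in(0,1)$, your condition $e^{-\beta N}\le\rho$ can be replaced by the $\eps$-independent condition $e^{-\beta N}\le 1-\xi$, so there is in fact no genuine order-of-quantifiers subtlety here.

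For part~(a), you are heading in the right conceptual direction --- rewriting $\int_{\Delta_0}L^N\tilde\psi\,dm_\Delta$ as $\int_{f^{-N}\Delta_0}\tilde\psi\,dm_\Delta$, truncating high levels using the sup bound $|\tilde\psi|_\infty\le e^R\bar\tau\int\tilde\psi\,dm_\Delta$ and the exponential tails, and then using the Markov partition $\varkappa^N$ and mixing of the tower to show a uniform fraction of the low-level mass returns to $\Delta_0$ at time $N$. But you do not actually carry out step~(ii); you only state that it should work and label it ``the most intricate step of the entire proof.'' The paper does not reprove~(a) at all: it cites \cite[Lemma~4.5]{KKM16exp} verbatim and notes, by inspection of that proof, that one is free to take $\eps$ as small and $N$ as large as one likes. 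So the bulk of what you treat as ``the main obstacle'' is a result that is already available in the literature; the genuinely new content of Lemma~\ref{lem-recur} in this paper is only part~(b), which you handle correctly. Two further remarks: you should flag that the standing mixing assumption on $f$ (introduced just before this lemma) is essential for your step~(ii), since without it the renewal argument can fail for periodicity reasons; and the identity $m_\Delta(f^{-N}\Delta_0)=\bar\tau^{-1}$ you invoke is not directly usable as stated, because $f^{-N}\Delta_0$ is a union of $\varkappa^N$-cells spread across many levels whose individual measures vary --- what matters is the interaction of this set with the cell structure, not just its total measure.
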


\begin{proof}
  (a) is proved in \cite[Lemma~4.5]{KKM16exp}. Following the
  proof, we are free to choose \(\eps\) as small as needed
  and \(N\) as large as needed.
  By assumptions of Lemma~\ref{lemma:ccnnjj}, 
  \(m_\Delta \bigl( \bigcup_{\ell=n}^\infty \Delta_\ell )\) decays exponentially
  in \(n\), thus we can choose \(N\) and \(\eps\) so that (b) is satisfied.
\end{proof}

Further we assume that \(N\) and \(\eps\) are as in Lemma~\ref{lem-recur}.
Define \(\cB = L^N \cR\). Note that \(L \cB \subset \cB \subset \cR\).
For \(n \geq 0\) let \(\cB^n\) denote the set of observables \(\psi \colon \Delta \to [0,\infty)\)
such that \(L^n \psi \in \cB\) and \(|L^n (\psi 1_a)|_{\eta, \ell} \leq R\) 
for every \(a \in \varkappa^n\).

\begin{remark}
  If \(\psi \in \cB\), then \(L \psi \in \cB\).
  If \(\psi \in \cB^n\), then \(\psi \in \cB^k\) for all \(k \geq n\).
\end{remark}

Define a sequence \(p_n, n \geq -1\) by
\[
  p_{-1} = \xi \eps
  \quad \text{and} \quad
  p_n = 
  \begin{cases}
    (1-\xi) \eps \bigl( \frac{1-\eps}{1-\xi \eps} \bigr)^{n/N}
    , & n \in N \bZ \\
    0, & n \not \in N \bZ
  \end{cases}
  \quad \text{for } n \geq 0
  .
\]
Let \(t_n = 1 - \sum_{k=-1}^{n-1} p_k\) for \(n \geq 1\).
Then \(\sum_{k=-1}^{\infty} p_k = 1\), \(t_1 = 1-\eps\) and
for \(n \geq 2\) using Lemma~\ref{lem-recur} we obtain
\beq \label{eq:maae} \begin{aligned}
  t_{Nn} 
  = 1-\sum_{k=-1}^{Nn-1} p_k
  = (1-\xi \eps) \Bigl( \frac{1-\eps}{1-\xi \eps} \Bigr)^n
  \geq \min \bigl\{t_1,\, e^R \bar{\tau} 
  m_\Delta \bigl(\textstyle\bigcup_{\ell=Nn}^\infty \Delta_\ell \bigr) \bigr\}
  .
\end{aligned} \eeq

Let \(E_0 = \Delta_0\) and
\(E_k = \{ (y, \ell) \in \Delta : \ell=\tau(y)-k,\,\ell\ge1\}\)
for \(k \geq 1\). Then $\{E_0,E_1,\ldots\}$ defines a partition of $\Delta$ and 
\(m_\Delta (E_k) = m_\Delta(\Delta_k)\) for all \(k\).

\begin{proposition}
  \label{prop-E}
  If \(\psi \in \cB\) with $\int_\Delta \psi \, dm_\Delta=1$, then
  \(
    \int_{\bigcup_{\ell=n}^\infty E_\ell} \psi \, dm_\Delta
    \leq t_n,
  \)
for $n\ge1$.
\end{proposition}

\begin{proof}
  See \cite[Proposition~4.6]{KKM16exp}.
\end{proof}

\begin{proposition}  \label{prop-s}
  Let $p_j$, $q_j\in[0,\infty)$ be sequences such that
  \(\sum_{j=0}^{\infty} p_j = \sum_{j=0}^{\infty} q_j < \infty\)
  and $\sum_{j=0}^k q_j\ge \sum_{j=0}^k p_j$ for all $k\ge0$.
  Then there exist $s_{k,j}\in[0,1]$, $0\le j\le k$, such that
  $\sum_{j=0}^k s_{k,j}q_j=p_k$ for all $k\ge0$ and
  $\sum_{k=j}^\infty s_{k,j}=1$ for all $j\ge0$.
\end{proposition}

\begin{proof}
  See \cite[Proposition~4.7]{KKM16exp}.
\end{proof}

\begin{lemma}
  \label{lem-W}
  Let \(\psi \in \cB^n\) for some \(n \geq 0\).
  Then 
    $\psi =  \sum_{k=-1}^{\infty} \psi_k$,
  where \(\psi_k : \Delta \to [0,\infty)\) are such that
  \begin{itemize}
    \item[(a)] \(L^n (\psi_{-1} 1_a) = c_a 1_{\Delta_0}\)
      for all \(a \in \varkappa^n\), where \(c_a\) are nonnegative constants,
    \item[(b)] \(\sum_{a \in \varkappa^n} c_a 
      = p_{-1} \bar{\tau} \int_\Delta \psi \, dm_\Delta\),
    \item[(c)]
      \(\psi_k \in \cR^{n+k}\) for all $k\ge0$,
    \item[(d)] \(\int_\Delta \psi_k \, dm_\Delta = p_k \int_\Delta \psi \, dm_\Delta\)
      for all \(k \geq -1\)
  \end{itemize}
\end{lemma}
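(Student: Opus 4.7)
I would prove Lemma~\ref{lem-W} by a recursive ``peeling'' construction: starting with $\psi^{(-1)} := \psi$, at each stage $k \geq -1$ I peel off a nonnegative summand $\psi_k$ from the running residual $\psi^{(k)} := \psi - \sum_{j=-1}^{k-1}\psi_j$, using the uniform lower bound on the base from Lemma~\ref{lem-recur}(a) together with the regularity-preserving extraction of Proposition~\ref{prop-duude}(c). The target mass $p_k \int_\Delta \psi\,dm_\Delta$ at each stage is reverse-engineered so that the cumulative extracted mass through stage $k$ matches $(1-t_{k+1})\int_\Delta \psi\,dm_\Delta$ in \eqref{eq:maae}.

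For the base step $k=-1$, fix $a \in \varkappa^n$ with $f^n(a)=\Delta_0$ and let $\phi_a := L^n(\psi\cdot 1_a)$, which is supported on $\Delta_0$ with $|\phi_a|_{\eta,\ell} \leq R$; Proposition~\ref{prop-duude}(a) then gives $\phi_a \geq e^{-R}\bar\tau \int_{\Delta_0}\phi_a\,dm_\Delta$ on $\Delta_0$. I set
\[
  \alpha \;=\; \frac{p_{-1}\int_\Delta\psi\,dm_\Delta}{\int_{\Delta_0}L^n\psi\,dm_\Delta}\,, \qquad c_a \;=\; \alpha\,\bar\tau\int_{\Delta_0}\phi_a\,dm_\Delta
\]
(and $c_a=0$ for atoms $a$ with $f^n(a)\neq\Delta_0$). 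Since $L^n\psi \in \cB$, Lemma~\ref{lem-recur}(a) gives $\alpha \leq p_{-1}/\eps = \xi$, so Proposition~\ref{prop-duude}(c) with $t=\alpha$ makes $\phi_a - c_a 1_{\Delta_0}$ nonnegative with $|\,\cdot\,|_{\eta,\ell}\leq R$. Because $f^n\colon a \to \Delta_0$ is a bijection when $c_a > 0$, I lift uniquely to get a nonnegative $\psi_{-1}1_a$ with $L^n(\psi_{-1}1_a) = c_a 1_{\Delta_0}$. A direct summation gives $\sum_a c_a = \alpha\bar\tau\int_{\Delta_0}L^n\psi\,dm_\Delta = p_{-1}\bar\tau\int_\Delta\psi\,dm_\Delta$, which is (b); integrating further gives the $k=-1$ case of (d), and (a) is immediate.

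For the inductive step $k \geq 0$, set $\psi_k = 0$ when $k \notin N\bZ$ (matching $p_k=0$). For $k=jN$ I repeat the base-step argument at level $n+k$: the inclusion $L\cB \subset \cB$ keeps $L^{n+k}\psi^{(k)} \in \cB$, I extract atomically $c_{a,k} 1_{\Delta_0}$ from each $L^{n+k}(\psi^{(k)}1_a)$ with total mass $p_k\bar\tau\int_\Delta\psi\,dm_\Delta$, again with scalar $\leq \xi$ so that Proposition~\ref{prop-duude}(c) applies, and I lift uniquely via the bijection $f^{n+k}\colon a \to \Delta_0$. Corollary~\ref{cor-A}(a) then says $L^{n+k}\psi_k \in \cA$, which is (c). The geometric ratio $p_{k+N}/p_k = (1-\eps)/(1-\xi\eps)$ is precisely the fraction of mass that remains after one further $N$-step cycle of Lemma~\ref{lem-recur}(a), so $\sum_{k\geq-1}p_k = 1$ and the peels telescope to $\psi$ in $L^1(m_\Delta)$.

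The main obstacle is preserving the recursive invariant that after every peel the residual still lies in the regularity class where Proposition~\ref{prop-duude} and Lemma~\ref{lem-recur}(a) can be reapplied: this hinges on the coupling $\xi < e^{-R}$, which, combined with the fact that the extracted scalar at every stage is $\leq \xi$, is exactly what guarantees pointwise nonnegativity of the residual and preservation of $|\,\cdot\,|_{\eta,\ell} \leq R$ after subtracting a constant multiple of $1_{\Delta_0}$. A secondary subtlety is the asymmetric form of the weights, with $p_{-1}=\xi\eps$ versus $p_0=(1-\xi)\eps$ and the subsequent geometric decay factor $(1-\eps)/(1-\xi\eps)$: these are precisely tuned so that the cumulative extracted mass agrees with the tail bound on $\bigcup_{\ell\geq Nn}\Delta_\ell$ supplied by Lemma~\ref{lem-recur}(b), which is what ultimately yields the exactly exponential distribution claimed in Lemma~\ref{lemma:ccnnjj}.
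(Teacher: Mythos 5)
Your $k=-1$ step matches the paper essentially verbatim, but the construction for $k\geq 0$ takes a genuinely different route that, as sketched, does not work. The paper's proof does \emph{not} perform a sequential peeling from a running residual. Instead, it decomposes the residual $g = \psi - \psi_{-1}$ once and for all according to the natural first-arrival partition of the tower: $g_k := g\,1_{T^{-n}E_k}$, with $E_k = \{(y,\ell): \ell = \tau(y)-k\}$. The point of this decomposition is that $L^{n+k}g_k$ is automatically supported on $\Delta_0$, so $g_k\in\cA^{n+k}$ with no residual-class bookkeeping. The masses $q_k = \int g_k\,dm_\Delta$ are then not the target masses $p_k$; Proposition~\ref{prop-E} gives only the cumulative inequality $\sum_{j\leq k}q_j \geq \sum_{j\leq k}p_j$, and Proposition~\ref{prop-s} supplies a Stieltjes-type rearrangement $\psi_k = \sum_{j\leq k}s_{k,j}g_j$ that redistributes the $q_j$-mass to hit exactly $p_k$, while the inclusions $\cA^{n+j}\subset\cA^{n+k}$ for $j\leq k$ and the closure of $\cA^{n+k}$ under nonnegative combinations (Corollary~\ref{cor-A}(c),(d)) preserve (c).

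Your sequential peeling has two concrete problems that these devices are designed to solve. First, the claim that ``$L\cB\subset\cB$ keeps $L^{n+k}\psi^{(k)}\in\cB$'' is unjustified: $\psi^{(k)}$ is a \emph{difference} $\psi - \sum_{j<k}\psi_j$, and neither nonnegativity of the pushforward nor the $\cB$-class membership ($\cB = L^N\cA$) is preserved under subtraction; one would have to re-verify Lemma~\ref{lem-recur}(a) and Proposition~\ref{prop-duude}(c) for the residual at every level, and no such inductive invariant is established. Second, the mass freshly arriving at $\Delta_0$ at time $n+k$ is $q_k$, not $p_k$; if $q_k > p_k$, the unpeeled excess is on $\Delta_0$ at time $n+k$ but gets pushed \emph{off} $\Delta_0$ under further iteration, and your sequential scheme has no mechanism to capture it at a later target time. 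This is exactly what the $E_k$-decomposition together with Propositions~\ref{prop-E} and~\ref{prop-s} handles, and omitting them leaves the telescoping ``$\sum p_k = 1$ so the peels converge to $\psi$'' unsubstantiated: convergence of the target masses does not by itself imply the residual goes to zero.
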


\begin{proof}
  We follow the proof of \cite[Lemma~4.8]{KKM16exp}.
  Suppose without loss of generality that \(\int_\Delta \psi \, dm_\Delta = 1\).
    
  Define
  \[
    \textstyle 
    t = p_{-1} / \int_{\Delta_0}L^n \psi\,dm_\Delta
    = \xi\eps/\int_{\Delta_0}L^n \psi\,dm_\Delta
    .
  \]
  By Lemma~\ref{lem-recur},
  $\int_{\Delta_0} L^n \psi\,dm_\Delta\ge\eps$,
  so \(t \in [0,\xi]\).
  
  Under convention that \(0/0 = 0\), let
  \[
    \psi_{-1} = t \bar{\tau} \sum_{a \in \varkappa^n} \left(
      \frac{\int_{\Delta_0} L^n (\psi 1_a) \, dm_\Delta}{L^n (\psi 1_a)}
      \circ f^n
    \right) \psi 1_a
    .
  \]
  Then properties (a) and (b) are satisfied.
  
  Let \(g = \psi - \psi_{-1}\) and \(g_k = g 1_{T^{-n} E_k}\) for \(k \geq 0\).
  Then \(L^{n+k} g_k\) is supported on \(\Delta_0\) and
  \(|L^{n+k} (g_k 1_a)|_{\eta, \ell} \leq R\) for every \(a \in \varkappa^n\).
  By Corollary~\ref{cor-A}, \(g_k \in \cR^{n+k}\).
  
  Let \(q_k = \int_\Delta g_k \, dm_\Delta\). Then 
  \(\sum_{k=0}^\infty q_k = \sum_{k=0}^\infty p_k\) and
  by Proposition~\ref{prop-E},
  \(\sum_{k=0}^n q_k \geq \sum_{k=0}^n p_k\) for all \(n \geq 0\).
  Choose  \(s_{k,j} \in [0, 1]\) as in
  Proposition~\ref{prop-s}, and 
  define \(\psi_k : \Delta \to [0,\infty)\), $k\ge0$, by
  \[
    \textstyle
    \psi_k=\sum_{j=0}^k s_{k,j}g_j.
  \]
  Then (d) holds for all \(k\).
  Corollary~\ref{cor-A} implies (c).
\end{proof}

Let \(\bW\) be the countable set of all finite words in the alphabet 
$\bN_0$ including the zero length word, 
and let \(\bW_k\) be the subset consisting of words of length \(k\). 
Let \(\bP_\bW\) be the  probability measure
on \(\bW\) given for \(w = w_1 \cdots w_k \in \bW_k\) by
\(\bP_\bW(w) = p_{-1} p_{w_1} \cdots p_{w_k}\).
Define $r\colon \bW\to \bN_0$ by
$r(w)=\Sigma w+N|w|$, where
$\Sigma w=w_1+\dots+ w_k$ and $|w|=k$ for $w=w_1\cdots w_k$.

\begin{proposition}
  \label{prop-W}
  Let \(\psi \in \cB\) with \(\int_\Delta \psi \, dm_\Delta = 1\).
  Then
    $\psi = \sum_{w \in \bW} \psi_w$,
  where \(\psi_w : \Delta \to [0, \infty)\) are
  such that 
  \begin{itemize}
    \item[(a)] \(\int_\Delta \psi_w \, dm_\Delta = \bP_\bW(w)\),
    \item[(b)] \(L^{r(w)} \psi_w = \bP_\bW(w) \bar{\tau} 1_{\Delta_0}\),
    \item[(c)] \(L^{r(w)}(\psi_w 1_a) = c_{w,a} 1_{\Delta_0}\)
      for all \(a \in \varkappa^{r(w)}\), where \(c_{w,a}\)
      are nonnegative constants.
  \end{itemize}
\end{proposition}

\begin{proof}
  Proof is identical to \cite[Proposition~4.9]{KKM16exp}
  except for condition~(c), which is guaranteed by
  Lemma~\ref{lem-W}.
\end{proof}

\begin{definition}
  We say that a random variable \(X\) has geometric distribution
  with parameter \(\theta \in (0,1)\)
  (or \(X \sim \Geom(\theta)\)),
  if \(X\) takes values in \(\bN_0\) and
  \(\bP(X = n) = (1-\theta)^n \theta\) for \(n \geq 0\).
\end{definition}

\begin{proposition}
  \label{prop:geoi}
  Suppose that \(Y = \sum_{k=1}^{M} (1+X_k)\), where \(M \sim \Geom(\theta_M)\)
  and \(X_k \sim \Geom(\theta_X)\) are independent random variables.
  Let \(\eta_2 = \theta_X \theta_M\) and
  \(\eta_1 = \frac{\theta_M-\eta_2}{1-\eta_2}\).
  Then
  \[
    \bP(Y = n) = 
    \begin{cases}
      \eta_1 + (1-\eta_1) \eta_2, & n = 0 \\
      (1-\eta_1) \eta_2 (1-\eta_2)^n, & n \geq 1
    \end{cases}
    .
  \]
\end{proposition}

\begin{proof}
  We compute the probability generating function of \(Y\). For \(z \in \bR\),
  \[
    \bE(z^Y) = \bP(M = 0) + \bP(M \geq 1) \bE(z^{1+X_1}) \bE(z^Y)
    .
  \]
  Using
  \[
    \textstyle
    \bE(z^{1+X_1})
    = \sum_{k=0}^{\infty} \bP(X_1 = k) z^{k+1}
    = \frac{\theta_X z}{1-(1-\theta_X)z}
    ,
  \]
  we obtain
  \[
    \textstyle
    \bE(z^Y) = \eta_1 + (1-\eta_1) \frac{\eta_2}{1-(1-\eta_2) z}
    .
  \]
  Now, \(\bP(Y=n)\) is the coefficient at \(z^n\) in the above expression.
\end{proof}

\begin{proposition}
  \label{prop:njhal}
  There exist constants \(0 < \theta < 1\) and \(C_1, C_2 > 0\) such that
  \[
    \bP(r = n)
    = \begin{cases}
      C_1, &  n = 0 \\
      C_2 \theta^{n/N}, & n \in N \bN \\
      0, & \text{else}
    \end{cases}
  \]
\end{proposition}

\begin{proof}
  Recall that \(\bW_k\) is the subset of \(\bW\) consisting of words of length \(k\).
  Then \(\bP_\bW(\bW_k) = (1-p_{-1})^k p_{-1}\).
  Elements of \(\bW_k\) have the form \(w_1 \cdots w_k\)
  where \(w_1, \ldots, w_k\) can be regarded as independent identically
  distributed random variables, drawn from $\bN_0$ with distribution
  \[
    \bP(w_1 = n) = p_n/(1-p_{-1})
    = \begin{cases}
      \theta_1 (1-\theta_1)^{n/N}, & n \in N \bN_0 \\
      0 , & \text{else}
    \end{cases}
    ,
  \]
  where \(\theta_1 = \frac{(1-\xi)\eps}{1-\xi \eps}\). In other words,
  \(w_1/N \sim \Geom(\theta_1)\).
  
  Then the random variable \(r/N\) on \(\bW\) has the same distribution
  as \(Y\) in Proposition~\ref{prop:geoi} with \(\theta_M = p_{-1}\)
  and \(\theta_X = \theta_1\). The result follows.
\end{proof}

We are ready to complete the proof of Lemma~\ref{lemma:ccnnjj}.
Let \(\psi = dm / dm_\Delta =  \bar{\tau} 1_{\Delta_0}\) and
\(\psi = \sum_{w \in \bW} \psi_w\) be the decomposition
from Proposition~\ref{prop-W}.

Then
\(
  \psi 
  = \sum_{w \in \bW} \sum_{a \in A(w)} \psi_w 1_a
\),
where \(A(w) = \{a \in \varkappa^{r(w)} \colon a \subset \Delta_0 \text{ and } 
f^{r(w)}a = \Delta_0\}\).
To every \(w \in \bW\) and \(a \in A(w)\) there corresponds
\(u \in \cA\) such that \(a = Y_{u}\) (modulo zero \(m\) measure)
and \(r(w) = h_\cA(u)\).
Thus we can write
\[
  m = \sum_{u \in \cA} \bP_\cA(u) m_{u},
\]
where \(m_{u}\) are probability measures supported on \(Y_{u}\) and
\(\bP_\cA\) is a probability measure on \(\cA\)
such that \(\bP_\cA(h_\cA=n) = \bP_\bW(r=n)\) for all \(n\).

The result of Lemma~\ref{lemma:ccnnjj} follows from Proposition~\ref{prop:njhal}.

\subsection*{Acknowledgments}
This research was supported in part by a European
Advanced Grant {\em StochExtHomog} (ERC AdG 320977)
at the University of Warwick
and an Engineering and Physical Sciences Research Council grant
EP/P034489/1 at the University of Exeter.

The author is grateful to Mark Holland and Ian Melbourne for support.
The author has been very lucky with the referees who
were aware of the history of the problem 
and made invaluable suggestions for improvement of the manuscript.

The author is thankful to Christophe Cuny, J\'er\^ome Dedecker
and Florence Merlev\`ede for helpful comments and for pointing out 
a mistake in the construction of semiconjugacy.


\begin{thebibliography}{1}

\bibitem{BLW14} I.~Berkes, W.~Liu, W.~Wu,
\emph{Koml\'os-Major-Tusn\'ady approximation under dependence,}
Ann. Probab. \textbf{42} (2014), 794--817.

\bibitem{BP79} I.~Berkes, W.~Philipp,
\emph{Approximation theorems for independent and weakly dependent random vectors,}
Ann. Probab. \textbf{7} (1979), 29--54.

\bibitem{BLS03} H.~Bruin, S.~Luzzatto, S.~van~Strien,
\emph{Decay of correlations in one-dimensional dynamics,}
Ann. Sci. \'Ecole Norm. Sup. \textbf{36} (2003), 621--646.

\bibitem{CH96} N.~Chernov, C.~Haskell,
\emph{Nonuniformly hyperbolic K-systems are Bernoulli,}
Ergodic Theory Dynam. Systems \textbf{16} (1996), 19--44.

\bibitem{CM06} N. Chernov and R. Markarian,
\emph{Chaotic Billiards,}
Math. Surveys Monogr.,
\textbf{127} (2006).

\bibitem{CB01} J.-P.~Conze, S.~Le Borgne,
\emph{M\'ethode de martingales et flow g\'eod\'esique sur une surface
de courbure constante n\'egative,}
Ergodic Theory Dynam. Systems \textbf{21} (2001), 421--441.

\bibitem{CM15} C.~Cuny, F.~Merlev\`ede,
\emph{Strong invariance principles with rate for ``reverse'' martingales and applications,}
J. Theor. Probab. (2015), 137--183.

\bibitem{DP84} M.~Denker, W.~Philipp,
\emph{Approximation by Brownian motion for Gibbs measures
and flows under a function,}
Ergodic Theory Dynam. Systems \textbf{4} (1984), 541--552.

\bibitem{FMT03} M.~J.~Field, I.~Melbourne, A.~T\"or\"ok,
\emph{Decay of correlations, central limit theorems and
approximation by Brownian motion for compact Lie group extensions,}
Ergodic Theory Dynam. Systems \textbf{23} (2003), 87--110.

\bibitem{G10} S. Gou\"ezel,
\emph{Almost sure invariance principle for dynamical systems by spectral methods,}
Ann. Probab. \textbf{38} (2010), 1639--1671.

\bibitem{HK82} F.~Hofbauer, G.~Keller,
\emph{Ergodic properties of invariant measures for piecewise monotonic transformations,}
Math. Z. \textbf{180} (1982), 119--140.

\bibitem{K69} J.~Kiefer,
\emph{On the deviations in the Skorokhod-Strassen approximation scheme,}
Z. Wahrscheinlichkeitstheor. verw. Geb. \textbf{32} (1975), 111--131;
\textbf{13} (1969), 321--332.

\bibitem{KMT75} J.~Koml\'os, P.~Major, G.~Tusn\'ady,
\emph{An approximation of partial sums of independent RV'-s and the
sample DF}. \textrm{ I; II,}
Z. Wahrscheinlichkeitstheor. verw. Geb. \textbf{32} (1975), 111--131;
\textbf{34} (1976), 34--58.

\bibitem{K17} A.~Korepanov,
\emph{Equidistribution for nonuniformly expanding
dynamical systems, and application to the almost sure
invariance principle,}
Comm. Math. Phys. \textbf{359} (2018), 1123--1138.

\bibitem{KKM16exp} A.~Korepanov, Z.~Kosloff, I.~Melbourne,
\emph{Explicit coupling argument for nonuniformly hyperbolic transformations,}
Proc. Roy. Soc. Edinbourgh Sect. A., to appear.

\bibitem{KKM16mart} A.~Korepanov, Z.~Kosloff, I.~Melbourne,
\emph{Martingale-coboundary decomposition for families of dynamical systems,}
Ann. Inst. H. Poincar\'e Anal. Non Lin\'eaire (2016), to appear.

\bibitem{KP80} J.~Kuelbs, W.~Philipp,
\emph{Almost sure invariance principles for partial sums of mixing 
B-valued random variables,}
Ann. Probab. \textbf{8} (1980), 1003--1036.

\bibitem{MN05} I.~Melbourne, M.~Nicol,
\emph{Almost sure invariance principle for nonuniformly hyperbolic systems,}
Comm. Math. Phys. \textbf{260} (2005), 131--146.

\bibitem{MN09} I.~Melbourne, M.~Nicol,
\emph{A vector-valued almost sure invariance principle for hyperbolic dynamical systems,}
Ann. Probab. \textbf{37} (2009), 478--505.

\bibitem{MR12} F.~Merlev\`ede, E.~Rio,
\emph{Strong approximation of partial sums under dependence conditions
with application to dynamical systems,}
Stochastic Process. Appl. \textbf{122} (2012), 386--417.

\bibitem{MR15} F.~Merlev\`ede, E.~Rio,
\emph{Strong approximation for additive functionals of geometrically ergodic Markov chains,}
Electron. J. Probab. \textbf{20} (2015), 1--27.

\bibitem{PS75} W.~Philipp, W.~Stout,
\emph{Almost sure invariance principles for partial sums of weakly dependent
random variables,}
Amer. Math. Soc. Mem. \textbf{161} (1975).

\bibitem{S06} S.I.~Sakhanenko,
\emph{Estimates in the invariance principle in terms of truncated power moments,}
Sib. Math. J. \textbf{47}, 1113.



\bibitem{S64} V.~Strassen,
\emph{An invariance principle for the law of iterated logarithm,}
Z. Wahrscheinlichkeitstheor. verw. Geb.
\textbf{3} (1964),
211--226.

\bibitem{S67} V.~Strassen,
\emph{Almost sure behavior of sums of independent random variables and martingales,}
Proc. Fifth Berkeley Symp. on Math. Statist. and Prob.
\textbf{2} (1967), 
315--343.

\bibitem{Y98} L.-S. Young,
\emph{Statistical properties of dynamical systems with some hyperbolicity,}
Ann. of Math. \textbf{147} (1998), 585--650.

\bibitem{Y99} L.-S. Young,
\emph{Recurrence times and rates of mixing,}
Israel J. Math. \textbf{110} (1999), 153--188.

\bibitem{Z13} A.Yu.~Zaitsev,
\emph{The accuracy of strong Gaussian approximation for sums of independent random vectors,}
Russian Math. Surveys \textbf{68} (2013), 721.

\bibitem{Z09} R.~Zweim\"uller,
\emph{Measure preserving transformations similar to Markov shifts,}
Israel J. Math. \textbf{173} (2009), 421-443.

\end{thebibliography}
\end{document}